\numberwithin{equation}{section}
\def\P{{\mathbb P}}
\newtheorem{theorem}{Theorem}[section]
\newtheorem{lemma}[theorem]{Lemma}
\newtheorem{proposition}[theorem]{Proposition}
\newtheorem{corollary}[theorem]{Corollary}
\theoremstyle{definition}
\newtheorem{definition}[theorem]{Definition}
\newtheorem{remark}[theorem]{Remark}
\newtheorem{notation and conventions}[theorem]{Notation and Conventions}
\newtheorem{convention and reminder}[theorem]{Convention and Reminder}
\newtheorem{convention and remark}[theorem]{Convention and Remark}
\newtheorem{definition and remark}[theorem]{Definition and Remark}
\newtheorem{reminders and definition}[theorem]{Reminders and Definition}
\newtheorem{remark and notations}[theorem]{Remark and Notations}
\newtheorem{notation and remark}[theorem]{Notation and Remark}
\newtheorem{example}[theorem]{Example}
\newcommand\reg{\operatorname{reg}}
\newcommand\Ker{\operatorname{\Ker}}
\newcommand{\stars}{}
\DeclareRobustCommand{\stars}[1]{\stars@{#1}}
\newcommand{\stars@}[1]{%
  \ifcase#1\relax\or\stars@one\or\stars@two\or\stars@three\or\stars@four
  \else ??\fi
}
\newcommand{\stars@char}{$\scriptstyle*$}
\newcommand{\stars@base}[1]{%
  $\m@th\vcenter{\offinterlineskip\ialign{\hfil##\hfil\cr#1\crcr}}$%
}
\newcommand{\stars@one}{%
  \stars@base{\stars@char}%
}
\newcommand{\stars@two}{%
  \stars@base{\stars@char\cr\stars@char}%
}
\newcommand{\stars@three}{%
  \stars@base{\stars@char\cr\stars@char\stars@char}%
}
\newcommand{\stars@four}{%
  \stars@base{\stars@char\stars@char\cr\stars@char\stars@char}%
}
\title{Castelnuovo-Mumford regularity of finite schemes}
\begin{document}

\author{Donghyeop Lee}
\address{Department of Mathematics, Korea University, Seoul 136-701, Korea}
\email{porket333@korea.ac.kr}

\author{Euisung Park}
\address{Department of Mathematics, Korea University, Seoul 136-701, Korea}
\email{euisungpark@korea.ac.kr}

\date{Seoul, \today}

\subjclass[2010]{14N05, 51N35} \keywords{Finite scheme, Castelnuovo-Mumford regularity, Rational normal curve}

\maketitle \thispagestyle{empty}

\begin{abstract}
Let $\Gamma \subset \P^n$ be a nondegenerate finite subscheme of degree $d$. Then the Castelnuovo-Mumford regularity ${\rm reg} ({\Gamma})$ of $\Gamma$ is at most $\left\lceil \frac{d-n-1}{t(\Gamma)} \right\rceil +2$ where $t(\Gamma)$ is the smallest integer such that $\Gamma$ admits a $(t+2)$-secant $t$-plane. In this paper, we show that ${\rm reg} ({\Gamma})$ is close to this upper bound if and only if there exists a unique rational normal curve $C$ of degree $t(\Gamma)$ such that $\reg (\Gamma \cap C) = \reg (\Gamma)$.
\end{abstract}

\maketitle  \setcounter{page}{1}

\section{Introduction}
\noindent Throughout this paper, we work over an algebraically closed field $\mathbb{K}$ of characteristic $0$. A closed subscheme $\Gamma \subset \P^n$, defined by a sheaf of ideals $\mathcal{I}_{\Gamma}$ in $\mathcal{O}_{\P^n}$, is said to be $p$-regular in the sense of Castelnuovo-Mumford if
\begin{equation*}
H^{i}(\mathbb{P}^n,\mathcal{I}_{\Gamma}(p-i))=0 \ \text{ for all } \ i\geq{1}.
\end{equation*}
The interest in this concept stems partly from the fact that $\Gamma$ is $p$-regular if and only if for every $j \geq 0$ the minimal generators of the $j$-th syzygy module of the saturated homogeneous ideal $I (\Gamma)$ of $\Gamma$ in the homogeneous coordinate ring $R := \mathbb{K} [x_0 , x_1 , \ldots, x_n ]$ of $\P^n$ occur in degree $\leq p+j$ (cf. \cite{EG}). In particular, $I (\Gamma)$ can be generated by forms of degree $\leq p$. The Castelnuovo-Mumford regularity ${\rm reg} ({\Gamma})$ of ${\Gamma}$ is defined to be the least integer $p$ such that ${\Gamma}$ is $p$-regular. In this paper, we study the regularity of finite subschemes of $\mathbb{P}^n$.

Let $\Gamma \subset \P^n$ be a nondegenerate finite subscheme of degree $d \geq n+3$. There is a well-known upper bound of ${\rm reg}(\Gamma)$ in terms of some basic invariants of $\Gamma$. To state precisely, recall that a subspace $\Lambda$ of $\P^n$ is said to be a $(t+2)$-secant $t$-plane to $\Gamma$ if it is a $t$-dimensional subspace of $\P^n$ such that
\begin{equation*}
{\rm length} (\mathcal{O}_{\mathbb{P}^n} / (\mathcal{I}_{\Lambda} +\mathcal{I}_{{\Gamma}})) \geq t+2
\end{equation*}
where $\mathcal{I}_{\Lambda}$ is the sheaf of ideals of $\Gamma$ in $\P^n$. We denote by $t(\Gamma)$ the smallest integer $t$ such that $\Gamma$ admits a $(t+2)$-secant $t$-plane. Then $1 \leq t(\Gamma) \leq n$. Also, $\Gamma$ is said to be \textit{in linearly general position} if $t(\Gamma)$ is equal to $n$. It always holds that
\begin{equation}\label{ineq:kwak}
{\rm reg}(\Gamma) \leq \left\lceil \frac{d-n-1}{t(\Gamma)} \right\rceil +2
\end{equation}
where $\lceil{a}\rceil$ is the smallest integer greater than or equal to $a$ (cf. \cite[Proposition 2.1]{K}). Briefly speaking, this shows that in order for ${\rm reg}(\Gamma)$ to become large, the value of $t(\Gamma)$ must be small. Along this line, our paper aims to explore answers to the following problem.\\

\begin{enumerate}
  \item[$(*)$] When ${\rm reg}(\Gamma)$ is close to the upper bound $\left\lceil \frac{d-n-1}{t(\Gamma)} \right\rceil +2$ in (\ref{ineq:kwak}), find some elementary geometric reasons for why $\Gamma$ is not $({\rm reg}(\Gamma)-1)$-regular.\\
\end{enumerate}

\noindent In the case of $t(\Gamma)=1$, the answer to this problem was obtained in \cite{LPW}, where it is proved that if
\begin{equation*}
\left\lceil \frac{d-n-1}{2} \right\rceil +3 \leq {\rm reg}(\Gamma) \leq  d-n+1
\end{equation*}
(and hence $t(\Gamma) =1$), then $\Gamma$ admits a ${\rm reg}(\Gamma)$-secant line. This result provides an intuitive and geometric explanation for why $\Gamma$ fails to be $({\rm reg}(\Gamma)-1)$-regular. 

Regarding the problem $(*)$ and the above result in \cite{LPW}, our first main result in this paper is as follows :

\begin{theorem}\label{thm:main1}
Let $\Gamma \subset \P^n$ be a nondegenerate finite subscheme of degree $d$ such that 
\begin{equation}\label{eq:1}
\left\lceil \frac{d-n-1}{t(\Gamma)+1} \right\rceil +3 \leq {\rm reg}(\Gamma) \leq \left\lceil \frac{d-n-1}{t(\Gamma)} \right\rceil +2.
\end{equation}
Then there exists a unique $t(\Gamma)$-dimensional subspace $\Lambda$ of $\P^n$ such that
\begin{enumerate}
\item[$(i)$] $\Gamma \cap \Lambda$ is nondegenerate and in linearly general position as a subscheme of $\Lambda$
\end{enumerate}
and
\begin{enumerate} 
\item[$(ii)$] ${\rm reg}(\Gamma \cap \Lambda) = {\rm reg}(\Gamma)$. 
\end{enumerate}
\end{theorem}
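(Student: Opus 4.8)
The plan is to pass to the Hilbert function of $\Gamma$ and to realize the slow growth forced by (\ref{eq:1}) as an actual $t(\Gamma)$-plane. Throughout I set $t=t(\Gamma)$, $r=\reg(\Gamma)$, and let $h(k)=\dim (R/I(\Gamma))_k$; since $\Gamma$ is finite, $H^{1}(\mathcal{I}_\Gamma(k))=0$ exactly when $h(k)=d$, so $r-1=\min\{k : h(k)=d\}$. The engine behind the bound (\ref{ineq:kwak}) of \cite{K} is the growth estimate $h(k)-h(k-1)\ge t$ valid for every $k$ with $h(k-1)<d$: a smaller jump would produce a $(t'+2)$-secant $t'$-plane with $t'<t$, contradicting the minimality of $t(\Gamma)$. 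I would first isolate this estimate and then read (\ref{eq:1}) through it. Since $r$ exceeds the value $\lceil\frac{d-n-1}{t+1}\rceil+2$ that one obtains when the jumps are $\ge t+1$ throughout, not every jump can exceed $t$; hence $h(k)-h(k-1)=t$ for at least one degree $k\le r-1$.

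The second step is to promote a single minimal jump to a whole terminal interval and then to geometrize it. By Gotzmann/Macaulay persistence, once a jump equals the minimal value $t$ it retains that value until $h$ reaches $d$; thus $\{k : h(k)-h(k-1)=t\}$ is an interval $[k_0,r-1]$ ending exactly where $\reg(\Gamma)$ is attained, and (\ref{eq:1}) forces $r-k_0$ to be large. The heart of the argument is the geometric consequence of this extremal growth: a long run of minimal jumps $t$ forces a $t$-dimensional linear subspace $\Lambda$ on which the part of $\Gamma$ responsible for the slow growth is concentrated. This is the exact higher-dimensional analogue of the linear-factor argument used for $t=1$ in \cite{LPW}, where minimal growth by $1$ forces the relevant forms to share a common linear factor, hence a common secant line. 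Having produced $\Lambda$, I would recover $\deg(\Gamma\cap\Lambda)=:m$ from the length of $[k_0,r-1]$ and check, using (\ref{ineq:kwak}) inside $\Lambda\cong\P^{t}$, that $\reg(\Gamma\cap\Lambda)=\lceil\frac{m-1}{t}\rceil+1=r$, which is precisely property $(ii)$.

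Property $(i)$ then follows from the minimality of $t(\Gamma)$ applied inside $\Lambda$. If $\Gamma\cap\Lambda$ were degenerate in $\Lambda$ it would lie in a $(t-1)$-plane while still having degree $m\ge t+2$, producing a $(t'+2)$-secant $t'$-plane of $\Gamma$ with $t'\le t-1$, contradicting $t(\Gamma)=t$; the same contradiction rules out any $(t'+2)$-secant $t'$-plane of $\Gamma\cap\Lambda$ with $t'<t$. Hence $t(\Gamma\cap\Lambda)=\dim\Lambda=t$, which is exactly the assertion that $\Gamma\cap\Lambda$ is nondegenerate and in linearly general position in $\Lambda$.

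For uniqueness I would argue that $\Lambda$ is intrinsic to $\Gamma$: the terminal minimal-growth interval of $h$ is unique, and the geometrization realizes $\Lambda$ as the linear space it cuts out — equivalently, as the distinguished component of the base locus of the forms of degree $r-2$ through $\Gamma$ — so it cannot depend on choices. Concretely, a second subspace $\Lambda'\ne\Lambda$ with the same two properties would contribute its own jumps near degree $r-1$; since $\dim(\Lambda\cap\Lambda')<t$ forces $\deg(\Gamma\cap\Lambda\cap\Lambda')\le t+1$ by $(i)$, the two subschemes would be essentially independent and would push the terminal jump of $\Gamma$ above $t$, contradicting the minimal growth established above. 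Here the lower bound in (\ref{eq:1}) is used in an essential way, as it is what guarantees the terminal interval is long enough to be unambiguous. I expect the main obstacle to be the geometrization step of the second paragraph: turning the purely numerical minimal-growth condition into an honest $t$-plane carrying exactly the right regularity is the deepest input, generalizing the $t=1$ case of \cite{LPW} from a single common linear factor to a common $t$-plane, while everything else is bookkeeping with the Hilbert function together with the minimality of $t(\Gamma)$.
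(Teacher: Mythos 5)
Your overall strategy --- read the hypothesis (\ref{eq:1}) through the first difference $\Delta h_\Gamma$ of the Hilbert function, detect a terminal interval of minimal growth $t=t(\Gamma)$, and then ``geometrize'' that interval into a $t$-plane --- is genuinely different from the paper's proof, which never works with Macaulay/Gotzmann bounds at all. The paper instead proves an induction on $d$ (Theorem \ref{thm:Thm RPH}): using a hyperplane $H_0$ meeting $\Gamma$ in length $\geq n+1$, the Horace exact sequence forces $\reg(\Gamma:H_0)\geq\reg(\Gamma)-1$, Lemma \ref{lem:not preserving hyperplane section} shows $t(\Gamma:H_0)=t(\Gamma)$ and that the residual scheme again satisfies the hypotheses, and a numerical contradiction rules out the possibility that every hyperplane drops the regularity; iterating the resulting regularity-preserving hyperplane section $n-t(\Gamma)$ times produces $\Lambda$, and uniqueness is a direct degree count on $\Lambda_1\cup\Lambda_2$. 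So the two routes are not variants of one another.

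The problem is that your route has an unproven step exactly where the theorem's content lies, and you say so yourself: the ``geometrization'' of a long run of jumps equal to $t$ into an honest $t$-plane $\Lambda$ with $\reg(\Gamma\cap\Lambda)=\reg(\Gamma)$ is asserted, not argued. This is not a routine extension of the $t=1$ case of \cite{LPW}: for $t=1$, persistent growth by $1$ in the Artinian reduction yields (via a Davis/GCD-type argument) a common linear factor, hence a secant line; for $t\geq 2$, Gotzmann-type structure theory for persistent growth equal to $t$ produces a length-$t$ ``virtual'' subscheme of the Artinian reduction, and there is no standard theorem converting this into a linear $t$-plane that both meets $\Gamma$ in the right degree and carries all of the terminal growth (the latter is what property $(ii)$ requires). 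Several auxiliary claims are also left unjustified or are too strong: the lower bound $\Delta h_\Gamma(k)\geq t$ whenever $h_\Gamma(k-1)<d$ is essentially equivalent to Kwak's bound (\ref{ineq:kwak}) and needs a proof, not the one-line appeal to secant planes; the persistence claim via Macaulay needs the hypothesis $t\leq k$ in the relevant degrees; your formula $\reg(\Gamma\cap\Lambda)=\lceil\frac{m-1}{t}\rceil+1$ asserts that $\Gamma\cap\Lambda$ has the \emph{maximal} regularity for its degree in $\P^t$, which is stronger than $(ii)$ and is not forced by the hypotheses; and the uniqueness argument (``two subspaces would push the terminal jump above $t$'') is a heuristic where the paper gives a concrete count, namely $d_1+d_2-(r+1)\leq d-(n-\dim\langle\Lambda_1\cup\Lambda_2\rangle)$ against the lower bounds on $d_i$ coming from (\ref{eq:1}). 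As it stands the proposal is a plausible program, not a proof; to complete it you would need to supply the structure theorem behind the geometrization step, which is the hardest part of the theorem.
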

  
The proof of Theorem \ref{thm:main1} is in Section 3.

Note that if ${\rm reg}(\Gamma)$ holds the inequalities in (\ref{eq:1}), then $d$ is at least $n+ t(\Gamma)+2$.

Theorem \ref{thm:main1} generalizes the previously mentioned result in \cite{LPW}. That is, if
\begin{equation*}
\left\lceil \frac{d-n-1}{2} \right\rceil +3 \leq {\rm reg}(\Gamma) \leq  d-n+1
\end{equation*}
(and hence $t(\Gamma) =1$), then Theorem \ref{thm:main1} says that there exists a unique line $\Lambda = \P^1$ such that ${\rm reg}(\Gamma \cap \Lambda) ={\rm reg}(\Gamma)$. That is, $\Gamma$ admits a unique ${\rm reg}(\Gamma)$-secant line.\\

Theorem \ref{thm:main1} gives us a partial answer for the problem $(*)$. That is, if ${\rm reg}(\Gamma)$ satisfies the inequality (\ref{eq:1}) then there exists a subscheme $\Gamma \cap \Lambda$ of $\Gamma$ which is in linearly general position in $\Lambda$ and which satisfies ${\rm reg}(\Gamma \cap \Lambda) = {\rm reg}(\Gamma)$. This leads us to study finite schemes in linearly general position whose regularity is close to the upper bound in (\ref{ineq:kwak}). We begin with recalling the following definition.

\begin{definition}
A nondegenerate finite set $\Gamma \subset \P^n$ of $d$ points is said to be \textit{in uniform position} if 
\begin{equation*}
h_{Y} (\ell) = \min \{ |Y| , h_{\Gamma} (\ell) \}
\end{equation*}
for any subset $Y$ of $\Gamma$ and any $\ell \geq 0$, where $h_{\Gamma} (t)$ and $h_{Y} (t)$ are the Hilbert functions of $\Gamma$ and $Y$, respectively. 
\end{definition}

Any finite set in uniform position is always in linearly general position. When $\Gamma$ is in uniform position and $d$ is large enough, the maximal regularity case was classified as follows.

\begin{theorem}[N. V. Trung and G. Valla in \cite{TV} and U. Nagel in \cite{N}]\label{thm:Nagel}
Let ${\Gamma} \subset \mathbb{P}^n$ be a finite set of $d$ points in uniform position. Then

\renewcommand{\descriptionlabel}[1]%
{\hspace{\labelsep}\textrm{#1}}
\begin{description}
\setlength{\labelwidth}{13mm} \setlength{\labelsep}{1.5mm}
\setlength{\itemindent}{0mm}

\item[{\rm (1)}] Suppose that $d>(n+1)^2$. Then
$${\rm{reg}}(\Gamma)=\left\lceil\frac{d-1}{n}\right\rceil+1 \text{ if and only if }{\Gamma} \text{ lies on a rational normal curve}.$$

\item[{\rm (2)}] Suppose that $n\geq{3}$, $d\geq min(3n+1,2n+5)$ and ${\Gamma}$ does not lie on a rational normal curve. Then ${\rm reg}({\Gamma}) \leq \tau (n,d)$ where
$$\tau (n,d) := \begin{cases} \left\lceil\frac{d}{n+1}\right\rceil+2
\quad\text{ if } n+1 \text{ divides } d, \text{and}\\
\left\lceil\frac{d}{n+1}\right\rceil+1 \quad\text{ otherwise}. \end{cases}$$
\end{description}
\end{theorem}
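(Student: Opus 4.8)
The plan is to translate the whole problem into the language of the h-vector of $\Gamma$ and read the regularity off its length. Since $\Gamma$ is in uniform position it is in linearly general position, so $t(\Gamma)=n$; and since a finite scheme is arithmetically Cohen--Macaulay, a general linear form is a nonzerodivisor on $R/I(\Gamma)$, whose image is an Artinian ring with Hilbert function the first difference $\Delta h_\Gamma(\ell)=h_\Gamma(\ell)-h_\Gamma(\ell-1)$. Writing the resulting h-vector as $(h_0,\dots,h_s)$, one has $h_0=1$, $h_1=n$ (nondegeneracy), $h_i>0$ for $i\le s$, $\sum_i h_i=d$, and $\reg(\Gamma)=s+1$. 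In this language (\ref{ineq:kwak}) with $t(\Gamma)=n$ reads $s\le\lceil (d-1)/n\rceil$, so the theorem amounts to describing exactly when this length is attained and how far it must drop otherwise.

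For the easy implication in (1), I would argue directly: $d$ points on a rational normal curve $C$ of degree $n$ are the image of $d$ points of $\P^1$ under $\mathcal{O}_{\P^1}(n)$, hence impose independent conditions on forms of each degree $\ell$ up to the dimension $n\ell+1$ of the restricted linear system. Thus $h_\Gamma(\ell)=\min(d,n\ell+1)$, the h-vector is $(1,n,n,\dots,n,\rho)$ with $1\le\rho\le n$, its length is $\lceil(d-1)/n\rceil+1$, and $\reg(\Gamma)=\lceil(d-1)/n\rceil+1$ as claimed.

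The substance is the converse together with (2). The first ingredient is the growth bound for points in uniform (indeed linearly general) position,
\[
\Delta h_\Gamma(\ell)\ge \min\{n,\ d-h_\Gamma(\ell-1)\}\qquad(\ell\ge 1),
\]
which forces $h_i\ge n$ for every $i$ in the middle range $2\le i\le s-1$ and shows that the maximal length $s=\lceil(d-1)/n\rceil$ can occur only when $h_i=n$ throughout that range. The second ingredient identifies this extremal shape geometrically: I would show that $\Delta h_\Gamma(2)=n$, i.e. $h_\Gamma(2)=2n+1$, forces $\Gamma$ onto a rational normal curve. Indeed this equality says $\Gamma$ lies on $\binom{n}{2}$ independent quadrics, and for $d$ large and $\Gamma$ in uniform position the base locus of these quadrics is a nondegenerate irreducible curve through $\Gamma$ of degree equal to its codimension plus one; by the classification of varieties of minimal degree such a curve is a rational normal curve. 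Chaining these two facts (maximal length $\Rightarrow$ all middle differences equal $n$ $\Rightarrow$ $\Delta h_\Gamma(2)=n$ $\Rightarrow$ rational normal curve) yields the ``only if'' part of (1).

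For (2), suppose $\Gamma$ does not lie on a rational normal curve; then $\Delta h_\Gamma(2)\ge n+1$ by the previous step, and the finer structure of the Hilbert function of points in uniform position---its first difference is nondecreasing on the initial range before its terminal strict descent---propagates this to $\Delta h_\Gamma(\ell)\ge n+1$ throughout the middle range. Summing, $d=\sum_i h_i$ is bounded below by an expression of the form $1+n+(n+1)(s-2)+(\text{tail})$, which caps $s$ near $d/(n+1)$ and gives $\reg(\Gamma)=s+1\le\tau(n,d)$; the exact value of $\tau(n,d)$ and the split according to whether $n+1$ divides $d$ fall out of optimizing this estimate against the terminal-descent constraint. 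The main obstacle is precisely this quantitative control: one needs the sharp lower bounds on $\Delta h_\Gamma$ across the \emph{entire} middle range (not merely at $\ell=2$) for uniform position points, together with the minimal-degree characterization underlying $\Delta h_\Gamma(2)=n\Rightarrow$ rational normal curve. It is to make both valid that the hypotheses $d>(n+1)^2$ in (1) and $d\ge\min(3n+1,2n+5)$ in (2) are imposed.
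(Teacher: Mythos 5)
This statement is quoted from Trung--Valla and Nagel; the paper itself gives no proof (it only remarks that part (1) rests on the classical Castelnuovo Lemma), so your attempt has to be judged against those sources rather than against an argument in the text. Your framework --- pass to the Artinian reduction, read $\reg(\Gamma)=s+1$ off the h-vector $(1,n,h_2,\dots,h_s)$, use the uniform-position growth bound $h_\Gamma(\ell)\ge\min\{d,\,h_\Gamma(\ell-1)+n\}$, and invoke Castelnuovo's Lemma to convert $h_\Gamma(2)=2n+1$ into a rational normal curve --- is indeed the classical skeleton for part (1), and your computation of the h-vector of points on a rational normal curve (the easy direction) is correct.

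There are, however, two genuine gaps. First, the step ``maximal length forces $h_i=n$ throughout the middle range'' does not follow from $h_i\ge n$ alone: writing $d-1=n(s-1)+\rho$ with $1\le\rho\le n$, the constraint $\sum_{i=1}^{s}h_i=d-1$ together with $h_i\ge n$ for $i\le s-1$ and $h_s\ge 1$ leaves a slack of $\rho-1$, which can be as large as $n-1$; so $h_2$ could a priori be as large as $2n-1$ while $s$ is still maximal. One must instead run the contrapositive ($\Gamma$ not on a rational normal curve $\Rightarrow h_2\ge n+1\Rightarrow$ the length drops), and that leads to the second, more serious gap: the propagation $\Delta h_\Gamma(2)\ge n+1\Rightarrow\Delta h_\Gamma(\ell)\ge n+1$ across the whole middle range. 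You justify this by asserting that the first difference of the Hilbert function of a uniform-position set is ``nondecreasing on the initial range,'' but that is not a known consequence of uniform position in $\P^n$ for $n\ge 3$. What uniform position actually yields is the superadditivity $h_\Gamma(a+b)\ge\min\{d,\,h_\Gamma(a)+h_\Gamma(b)-1\}$, and feeding $h_\Gamma(2)\ge 2n+2$ into it only gives $\Delta h_\Gamma(\ell)+\Delta h_\Gamma(\ell+1)\ge 2n+1$, i.e.\ an average growth rate of about $n+\frac{1}{2}$ per step --- enough for neither the sharp constant $\tau(n,d)$ in (2) nor, when $n$ divides $d-1$, for the converse in (1). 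Closing this is precisely the content of Nagel's paper: when $h_\Gamma(2)$ is small one uses the generalized Castelnuovo lemmas of Eisenbud--Harris to place $\Gamma$ on a variety of (almost) minimal degree and then bounds the regularity by analyzing divisors on rational normal scrolls; this is also where the thresholds $d>(n+1)^2$ and $d\ge\min(3n+1,2n+5)$ genuinely enter. As written, your argument establishes the easy direction of (1) and only a weaker, non-sharp version of the converse and of (2).
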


Theorem \ref{thm:Nagel}.$(1)$ was shown using the classical Castelnuovo Lemma.

Theorem \ref{thm:Nagel} says that if ${\Gamma}$ is in uniform position and $d>(n+1)^2$, then either $\Gamma$ lies on a rational normal curve and so ${\rm{reg}}(\Gamma)$ is maximal or else ${\rm{reg}}(\Gamma)$ is at most $\tau (n,d)$.\\

From now on, let ${\Gamma} \subset \mathbb{P}^n$ be a finite scheme in linearly general position of degree $d \geq n+3$. We first define the integer ${\rho}({\Gamma})$ as
\begin{equation*}
\rho (\Gamma)={\rm max} \{\vert {\Gamma} \cap C \vert ~|~ C \text{ is a rational normal curve in } \P^n \}.
\end{equation*} 
By \cite[Theorem 1]{EH}, any finite scheme of degree $n+3$ in linearly general position is contained in a unique rational normal curve. Therefore, it always holds that
$$n+3\leq {\rho}({\Gamma})\leq d.$$
Also, Theorem \ref{thm:Nagel}.$(1)$ says that if ${\Gamma}$ is in uniform position and $d \geq (n+1)^2$, then ${\rm{reg}}(\Gamma)$ is maximal if and only if $\rho (\Gamma)=d$. 

Along this line, our second main result in the present paper is as follows.

\begin{theorem}\label{thm:main2}
Let ${\Gamma} \subset \P^n$, $n \geq 2$, be a finite subscheme of degree $d$ in linearly general position such that
\begin{equation}\label{eq:assumption on reg}
d \geq 4n^2+6n+1 \quad \mbox{and} \quad {\rm reg}({\Gamma}) \geq \left\lceil\frac{d-1}{n+\frac{n}{2n+2}}\right\rceil+3.
\end{equation}
Then
\begin{equation}\label{eq:intersection with RNC}
\rho(\Gamma)> d-(m+1)n \quad \mbox{where} \quad  m := \left\lceil\frac{d-1}{n}\right\rceil+1 - {\rm reg}({\Gamma}).
\end{equation}
Furthermore, there is a unique rational normal curve $C$ of degree $n$ such that
$$\rho(\Gamma)=\vert\Gamma\cap{C}\vert \quad\mbox{and} \quad {\rm reg}(\Gamma) = {\rm reg}(\Gamma \cap C).$$
\end{theorem}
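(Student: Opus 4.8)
The plan is to work throughout with the Hilbert function. Write $r=\reg(\Gamma)$ and recall that for a finite scheme $\reg(\Gamma)=1+\sigma(\Gamma)$, where $\sigma(\Gamma)=\min\{\ell : h_{\Gamma}(\ell)=d\}$, so that $H^1(\P^n,\mathcal I_{\Gamma}(\ell))=d-h_{\Gamma}(\ell)$ and $\reg(\Gamma)$ is the least $p$ with $H^1(\mathcal I_{\Gamma}(p-1))=0$. Since $\Gamma$ is in linearly general position one has $h_{\Gamma}(\ell)\ge \min\{d,\,n\ell+1\}$, with equality for every $\ell$ exactly when $\Gamma$ lies on a rational normal curve of degree $n$; this is what makes $\lceil\frac{d-1}{n}\rceil+1$ the maximal value of $r$ (the case $t(\Gamma)=n$ of \eqref{ineq:kwak}) and shows $m\ge 0$. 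First I would translate the hypothesis \eqref{eq:assumption on reg} into an explicit upper bound for $m$ and into the statement that $h_{\Gamma}(\ell)$ is forced to equal the ``curve value'' $n\ell+1$ on a long initial range; the numerical assumption $d\ge 4n^2+6n+1$ is precisely what makes this range of admissible $r$ nonempty. The proof then splits into three parts: (i) produce a rational normal curve $C$ of degree $n$ meeting $\Gamma$ in more than $d-(m+1)n$ points; (ii) prove $\reg(\Gamma)=\reg(\Gamma\cap C)$; and (iii) deduce uniqueness together with $|\Gamma\cap C|=\rho(\Gamma)$.

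The heart of the matter, and the step I expect to be the main obstacle, is (i). The guiding principle is that a scheme in linearly general position whose Hilbert function stays this close to $n\ell+1$ must carry a large subscheme on a single rational normal curve. I would first use a Macaulay-type growth estimate together with the lower bound on $r$ to force $h_{\Gamma}(\ell)$ down to the minimal value $n\ell+1$ in low degrees (in particular $h_{\Gamma}(2)=2n+1$), and then invoke the classical Castelnuovo Lemma---available because $\Gamma$ is in linearly general position with $d$ large, as in the proof of Theorem \ref{thm:Nagel}.$(1)$---to produce $C$ as the base locus of the quadrics through the relevant subscheme. The quantitative bound $|\Gamma\cap C|>d-(m+1)n$ would then come from estimating the residual $\Gamma\setminus C$: each point off $C$ raises $h_{\Gamma}$ above the curve value, and the total admissible excess is governed by $m$. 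Making the constant $(m+1)n$ sharp is the delicate point, and it is here that the exact shape of \eqref{eq:assumption on reg} and the bound $d\ge 4n^2+6n+1$ are consumed; an alternative route is an induction that locates one more chord of $C$ at each stage via the extremal secant analysis underlying Theorem \ref{thm:main1}.

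For (ii), set $\Gamma'=\Gamma\cap C$ and let $\Gamma''=\Gamma\setminus C$, a set of fewer than $(m+1)n$ points disjoint from $\Gamma'$, so that $\Gamma=\Gamma'\sqcup\Gamma''$ and $\mathcal I_{\Gamma'}+\mathcal I_{\Gamma''}=\mathcal O_{\P^n}$. From the exact sequence $0\to\mathcal I_{\Gamma}\to\mathcal I_{\Gamma'}\oplus\mathcal I_{\Gamma''}\to\mathcal O_{\P^n}\to 0$ one obtains, for every $\ell$, a surjection $H^1(\mathcal I_{\Gamma}(\ell))\twoheadrightarrow H^1(\mathcal I_{\Gamma'}(\ell))\oplus H^1(\mathcal I_{\Gamma''}(\ell))$ whose kernel is $\operatorname{coker}\big(H^0(\mathcal I_{\Gamma'}(\ell))\oplus H^0(\mathcal I_{\Gamma''}(\ell))\to H^0(\mathcal O_{\P^n}(\ell))\big)$. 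Because $\Gamma''$ is small, its regularity is far below $r$, so $H^1(\mathcal I_{\Gamma''}(r-2))=0$ and the surjection already yields $\reg(\Gamma\cap C)\le\reg(\Gamma)$. For the reverse inequality I would show that in degree $\ell=r-2$ the above cokernel vanishes---equivalently, that $\Gamma''$ imposes independent conditions in this degree so that the forms through $\Gamma'$ and through $\Gamma''$ cannot fill $H^0(\mathcal O_{\P^n}(r-2))$---giving $H^1(\mathcal I_{\Gamma'}(r-2))\cong H^1(\mathcal I_{\Gamma}(r-2))\ne 0$ and hence $\reg(\Gamma\cap C)=\reg(\Gamma)$. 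Since $\reg(\Gamma\cap C)=\lceil\frac{|\Gamma\cap C|-1}{n}\rceil+1$, this equality determines $|\Gamma\cap C|$ modulo $n$ and, combined with (i), reproduces the estimate \eqref{eq:intersection with RNC}.

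Finally, uniqueness is comparatively soft. If $C_1,C_2$ were rational normal curves of degree $n$ with $|\Gamma\cap C_i|>d-(m+1)n$, then inside $\Gamma$ one has $|\Gamma\cap C_1\cap C_2|\ge |\Gamma\cap C_1|+|\Gamma\cap C_2|-d>d-2(m+1)n$, and the bound $d\ge 4n^2+6n+1$ together with the explicit bound on $m$ makes the right-hand side at least $n+3$. These $n+3$ common points are in linearly general position, so by \cite[Theorem 1]{EH} they lie on a unique rational normal curve, whence $C_1=C_2$. In particular no rational normal curve meets $\Gamma$ in more points than $C$, so $|\Gamma\cap C|=\rho(\Gamma)$, and $C$ is the unique rational normal curve of degree $n$ with $\rho(\Gamma)=|\Gamma\cap C|$ and $\reg(\Gamma)=\reg(\Gamma\cap C)$, completing the proof.
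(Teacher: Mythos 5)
Your outer scaffolding is sound: the reduction of \eqref{eq:intersection with RNC} and of $\rho(\Gamma)=|\Gamma\cap C|$ to the single statement ``there is a unique rational normal curve $C$ with $\reg(\Gamma\cap C)=\reg(\Gamma)$'' matches the paper, and your uniqueness argument (two such curves would share a degree-$(n+3)$ subscheme of $\Gamma$ in linearly general position, hence coincide by \cite[Theorem 1]{EH}) is a correct, mildly different packaging of the paper's count $|(\Gamma\cap C_1)\cup(\Gamma\cap C_2)|>d$. But the existence step (i), which you yourself identify as the heart of the matter, rests on a false premise. The hypotheses \eqref{eq:assumption on reg} do \emph{not} force $h_{\Gamma}(2)=2n+1$, so Castelnuovo's Lemma cannot be applied to $\Gamma$ itself. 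Concretely, take $\Gamma_0$ of large degree on a rational normal curve $C$ and adjoin one point $p\notin C$ with $\Gamma_0\cup\{p\}$ in linearly general position: by the paper's Theorem \ref{thm:add several points} the regularity is unchanged and all hypotheses of Theorem \ref{thm:main2} hold, yet $h_{\Gamma_0\cup\{p\}}(2)=2n+2$ because the quadrics through $\Gamma_0$ cut out exactly $C$ and so do not all vanish at $p$. The entire difficulty of the theorem is to manufacture the ``relevant subscheme'' to which the Castelnuovo/Eisenbud--Harris criterion applies while certifying that passing to it does not drop the regularity; your fallback (``an induction that locates one more chord of $C$ at each stage'') is not an argument. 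The paper does this with a three-step Horace-method induction on residuals with respect to quadrics: first it extracts a subscheme $\Gamma'$ all of whose large quadric sections preserve its regularity, then (via Lemmas \ref{lem:lemma1} and \ref{lem:4.2}, tracking the defect $m$ under residuation) it upgrades this to the statement that every quadric meeting $\Gamma'$ in $\geq 2n+1$ points preserves the regularity of $\Gamma$ itself, and finally it intersects $\Gamma$ with a decreasing chain of such quadrics until the result lies on a rational normal curve. None of this is present in, or replaceable by, your sketch.

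A secondary issue: in step (ii) you write $\Gamma=\Gamma'\sqcup\Gamma''$ with $\Gamma''=\Gamma\setminus C$ and $\mathcal I_{\Gamma'}+\mathcal I_{\Gamma''}=\mathcal O_{\P^n}$. Since $\Gamma$ is an arbitrary finite scheme, this fails when a non-reduced component is supported on $C$ but not contained in it (the scheme-theoretic intersection and its complement then share support); this is why the paper works throughout with the residual scheme $\Gamma:Q$ and Proposition \ref{Basic facts}.$(5)$ rather than with a disjoint-union Mayer--Vietoris sequence. In the paper's architecture the equality $\reg(\Gamma\cap C)=\reg(\Gamma)$ is not proved after locating $C$; it is produced simultaneously with $C$ by the quadric-section process, so your step (ii) is solving a problem the correct proof never has to face in that form.
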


The proof of Theorem \ref{thm:main2} is in Section 4.

Considering the inequalities in (\ref{ineq:kwak}) and (\ref{eq:assumption on reg}), the following condition is required:
\begin{equation*}
\left\lceil\frac{d-1}{n+\frac{n}{2n+2}}\right\rceil+3 \leq \left\lceil\frac{d-1}{n}\right\rceil+1  
\end{equation*} 
Note that our assumption $d\geq4n^2+6n+1$ ensures that this inequality always holds.

Contrary to the results of Theorem \ref{thm:Nagel}, it can happen that if $\Gamma \subset \P^n$ is in linearly general position but not in uniform position then   
\begin{equation*}
\left\lceil\frac{d}{n+1}\right\rceil+2 < {\rm{reg}}(\Gamma) \leq \left\lceil\frac{d-1}{n}\right\rceil+1.
\end{equation*}
Theorem \ref{thm:main2} provides a very precise description of such finite subschemes. In particular, it says that if ${\rm{reg}}(\Gamma)$ is close to the maximal possible value then there exists a unique rational normal curve which contains most of $\Gamma$. Thus Theorem \ref{thm:main2}  
clearly shows what properties hold when the condition on $\Gamma$ is extended from the ``uniform position" to ``linearly general position".

Next, we apply Theorem \ref{thm:main2} to finite subschemes in linearly general position having maximal regularity. This can be compared to Theorem \ref{thm:Nagel}.$(1)$.

\begin{corollary}\label{cor:maximal case}
Let ${\Gamma} \subset \P^n$, $n \geq 2$, be a finite subscheme of degree $d \geq 4n^2+6n+1$ in linearly general position. Write $d = nq+r+2$ for some $0 \leq r \leq n-1$.
Then
\begin{equation*}
{\rm reg}({\Gamma}) = \left\lceil\frac{d-1}{n}\right\rceil+1 \ \text{  if and only if  } \
{\rho}({\Gamma})\geq d-r.
\end{equation*}
In particular, if $r=0$ and $\reg (\Gamma) = \left\lceil\frac{d-1}{n}\right\rceil+1$ then $\Gamma$ lies in a rational normal curve and hence it is in uniform position.
\end{corollary}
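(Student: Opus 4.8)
The plan is to derive Corollary \ref{cor:maximal case} from Theorem \ref{thm:main2} together with one classical computation on rational normal curves. Throughout I write $d=nq+r+2$ with $0\le r\le n-1$, so that $d-r=nq+2$; since $\Gamma$ is in linearly general position we have $t(\Gamma)=n$, and the upper bound (\ref{ineq:kwak}) reads $\reg(\Gamma)\le \lceil (d-n-1)/n\rceil+2=\lceil (d-1)/n\rceil+1=q+2$. Thus ``$\reg(\Gamma)$ is maximal'' means exactly $\reg(\Gamma)=q+2$. The one auxiliary fact I would record first is: \emph{if $Z\subset\P^n$ is a finite subscheme of degree $e$ lying on a rational normal curve $C$ of degree $n$, then $\reg(Z)=\lceil (e-1)/n\rceil+1$.} To see this, identify $C$ with $\P^1$ embedded by $\mathcal{O}_{\P^1}(n)$; since $C$ is projectively normal, the map $R_k\to H^0(C,\mathcal{O}_C(k))=H^0(\P^1,\mathcal{O}_{\P^1}(nk))$ is surjective, so $h_Z(k)$ equals the rank of $H^0(\mathcal{O}_{\P^1}(nk))\to H^0(\mathcal{O}_Z)$, namely $\min\{nk+1,e\}$. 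As $Z$ is finite, $\reg(Z)$ is the least $p$ with $h_Z(p-1)=e$, and solving $n(p-1)+1\ge e$ gives the formula.

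For the forward implication I would first check that the maximal value $q+2=\lceil (d-1)/n\rceil+1$ satisfies the regularity hypothesis of Theorem \ref{thm:main2}: this is exactly the inequality $\lceil (d-1)/(n+\frac{n}{2n+2})\rceil+3\le \lceil (d-1)/n\rceil+1$, already noted to follow from $d\ge 4n^2+6n+1$. Theorem \ref{thm:main2} then applies with $m=\lceil (d-1)/n\rceil+1-\reg(\Gamma)=0$ and yields a (unique) rational normal curve $C$ of degree $n$ with $\rho(\Gamma)=|\Gamma\cap C|$ and $\reg(\Gamma\cap C)=\reg(\Gamma)=q+2$. Plugging $e=\rho(\Gamma)$ into the auxiliary formula forces $\lceil (\rho(\Gamma)-1)/n\rceil=q+1$, hence $\rho(\Gamma)>nq+1$, i.e. $\rho(\Gamma)\ge nq+2=d-r$.

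For the converse I would argue directly, without invoking Theorem \ref{thm:main2}. Assume $\rho(\Gamma)\ge d-r=nq+2$ and choose a degree-$n$ rational normal curve $C$ attaining $|\Gamma\cap C|=\rho(\Gamma)$ (the maximum is attained, the values being bounded integers). The auxiliary formula gives $\reg(\Gamma\cap C)=\lceil (\rho(\Gamma)-1)/n\rceil+1\ge q+2$. Since $\Gamma\cap C$ is a finite subscheme of $\Gamma$, I would invoke the monotonicity $\reg(\Gamma\cap C)\le\reg(\Gamma)$ valid for finite schemes: $p$-regularity of $\Gamma$ means the surjection $R_{p-1}\twoheadrightarrow H^0(\mathcal{O}_\Gamma)$, which composed with the restriction $H^0(\mathcal{O}_\Gamma)\twoheadrightarrow H^0(\mathcal{O}_{\Gamma\cap C})$ shows $\Gamma\cap C$ is again $p$-regular. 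Together with the upper bound $\reg(\Gamma)\le q+2$ this pins down $\reg(\Gamma)=q+2$, completing the equivalence.

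For the final clause, $r=0$ gives $d-r=d$, so the equivalence yields $\rho(\Gamma)\ge d$; as $\rho(\Gamma)\le d$ always, $\rho(\Gamma)=d$ and the realizing curve satisfies $\deg(\Gamma\cap C)=\deg\Gamma$, whence $\Gamma=\Gamma\cap C\subset C$. Any subscheme $Y$ of a degree-$n$ rational normal curve has $h_Y(\ell)=\min\{|Y|,n\ell+1\}$, so the uniform-position identity $h_Y(\ell)=\min\{|Y|,h_\Gamma(\ell)\}$ holds automatically. I expect no deep obstacle in this corollary: all the geometry is carried by Theorem \ref{thm:main2}, and what remains is the ceiling arithmetic together with the two standard facts about rational normal curves. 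The only points requiring care are keeping the direction of the regularity-monotonicity inequality correct and confirming that the maximal regularity genuinely meets the numerical hypothesis of Theorem \ref{thm:main2}.
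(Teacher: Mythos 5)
Your proposal is correct and follows essentially the same route as the paper: the forward direction applies Theorem \ref{thm:main2} (after checking that $d\geq 4n^2+6n+1$ guarantees the regularity hypothesis) and then uses the formula $\reg(Z)=\lceil(|Z|-1)/n\rceil+1$ for subschemes of a rational normal curve, while the converse combines that same formula with the monotonicity of regularity under passing to subschemes and the upper bound $\reg(\Gamma)\leq q+2$. The only difference is cosmetic: you reprove the two auxiliary facts from scratch, whereas the paper simply cites them as Proposition \ref{Basic facts}.$(6)$ and $(1)$.
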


The proof of Corollary \ref{cor:maximal case} is in Section 5.\\  

We finish this section by providing an answer to the previous problem $(*)$ by combining Theorem \ref{thm:main1} and Theorem \ref{thm:main2}.
 
\begin{theorem}\label{thm:main3}
Let $\Gamma \subset \P^n$ be a nondegenerate finite subscheme of degree $d$ with $t(\Gamma) = t$. If
\begin{equation*}
\left \lceil{\frac{d-1}{t+\frac{t}{2t +2} }}\right \rceil+2 < {\rm{reg}}(\Gamma) \leq \left\lceil \frac{d-n-1}{t} \right\rceil +2,
\end{equation*}
then there exist a unique subspace $\P^{t}$ of $\P^n$ and a unique rational normal curve $C$ in $\P^{t}$ such that ${\rm{reg}}(\Gamma\cap C)={\rm{reg}}(\Gamma)$.
\end{theorem}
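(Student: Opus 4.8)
The plan is to deduce Theorem~\ref{thm:main3} from Theorems~\ref{thm:main1} and~\ref{thm:main2} by a two-step descent: first cut $\Gamma$ down to a finite scheme $\Gamma_0$ living in a $t$-plane $\Lambda$ via Theorem~\ref{thm:main1}, and then locate the rational normal curve inside $\Lambda \cong \P^t$ via Theorem~\ref{thm:main2}. The point is that the intersection with $\Lambda$ preserves the regularity, so the problem in $\P^n$ with invariant $t(\Gamma)=t$ becomes the corresponding problem for a linearly general position scheme in $\P^t$, which is exactly the regime of Theorem~\ref{thm:main2}.

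First I would verify the hypotheses of Theorem~\ref{thm:main1}. The upper bound in \eqref{eq:1} is identical to the one here (with $t(\Gamma)=t$), so only the lower bound needs checking. Since $t+\tfrac{t}{2t+2} < t+1$ and $d-1 \ge d-n-1$,
\begin{equation*}
\frac{d-1}{t+\frac{t}{2t+2}} \;>\; \frac{d-1}{t+1} \;\ge\; \frac{d-n-1}{t+1},
\end{equation*}
whence $\bigl\lceil \tfrac{d-1}{t+t/(2t+2)}\bigr\rceil \ge \bigl\lceil \tfrac{d-n-1}{t+1}\bigr\rceil$. As ${\rm reg}(\Gamma)$ is an integer strictly larger than $\bigl\lceil \tfrac{d-1}{t+t/(2t+2)}\bigr\rceil + 2$, it is at least $\bigl\lceil \tfrac{d-n-1}{t+1}\bigr\rceil + 3$, so \eqref{eq:1} holds. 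Theorem~\ref{thm:main1} then furnishes a unique $t$-plane $\Lambda$ such that $\Gamma_0 := \Gamma \cap \Lambda$ is nondegenerate and in linearly general position in $\Lambda \cong \P^t$, with ${\rm reg}(\Gamma_0) = {\rm reg}(\Gamma)$. If $t=1$ we are already done: the unique rational normal curve of degree $1$ in $\Lambda=\P^1$ is $\Lambda$ itself, so taking $C=\Lambda$ gives $\Gamma \cap C = \Gamma_0$ and ${\rm reg}(\Gamma \cap C)={\rm reg}(\Gamma)$, uniqueness being inherited from that of $\Lambda$.

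Assume now $t \ge 2$, and write $d_0 := \deg \Gamma_0 \le d$. I would apply Theorem~\ref{thm:main2} to $\Gamma_0 \subset \Lambda \cong \P^t$ (with $n$ replaced by $t$ and $d$ by $d_0$). The regularity hypothesis transfers cleanly: since $d_0 \le d$,
\begin{equation*}
{\rm reg}(\Gamma_0) = {\rm reg}(\Gamma) > \Bigl\lceil \tfrac{d-1}{t+t/(2t+2)}\Bigr\rceil + 2 \ge \Bigl\lceil \tfrac{d_0-1}{t+t/(2t+2)}\Bigr\rceil + 2,
\end{equation*}
so ${\rm reg}(\Gamma_0) \ge \bigl\lceil \tfrac{d_0-1}{t+t/(2t+2)}\bigr\rceil + 3$, matching \eqref{eq:assumption on reg}. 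Theorem~\ref{thm:main2} then produces a unique rational normal curve $C$ of degree $t$ in $\Lambda$ with ${\rm reg}(\Gamma_0 \cap C) = {\rm reg}(\Gamma_0)$. Since $C \subset \Lambda$ we have $\Gamma \cap C = (\Gamma \cap \Lambda) \cap C = \Gamma_0 \cap C$, giving ${\rm reg}(\Gamma \cap C) = {\rm reg}(\Gamma)$. Uniqueness of the pair $(\Lambda, C)$ follows from the uniqueness of $\Lambda$ in Theorem~\ref{thm:main1} and of $C$ in Theorem~\ref{thm:main2}.

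The main obstacle is the remaining degree hypothesis $d_0 \ge 4t^2+6t+1$ needed to invoke Theorem~\ref{thm:main2} inside $\P^t$. Here I would combine the maximal-regularity bound \eqref{ineq:kwak} for a linearly general scheme in $\P^t$, namely ${\rm reg}(\Gamma_0) \le \bigl\lceil \tfrac{d_0-1}{t}\bigr\rceil + 1$, with the inherited lower bound above; this forces $\bigl\lceil \tfrac{d_0-1}{t}\bigr\rceil \ge \bigl\lceil \tfrac{d_0-1}{t+t/(2t+2)}\bigr\rceil + 2$, and the gap $\tfrac{d_0-1}{t} - \tfrac{d_0-1}{t+t/(2t+2)} = \tfrac{d_0-1}{t(2t+3)}$ between the two ceiling arguments yields a lower bound on $d_0$. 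The delicate point is that this self-consistency inside $\P^t$ by itself is not quite enough to reach $4t^2+6t+1$; one must also exploit the lower bound on $d$ forced by the non-vacuousness of the interval in the hypothesis (comparing the two ceiling expressions gives $d-1 > n(2t+3)$) and track the ceilings carefully, so that the surplus $n\ge t$ feeds through the inequality $d_0 \ge \tfrac{(d-1)(2t+2)}{2t+3}+1$. Once this degree bound is secured, the descent closes and the theorem follows.
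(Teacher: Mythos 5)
Your argument is the paper's argument: the published proof of Theorem \ref{thm:main3} is exactly the two-step reduction you describe --- Theorem \ref{thm:main1} produces the unique $t$-plane $\Lambda$, and Theorem \ref{thm:main2} is then applied to $\Gamma\cap\Lambda$ inside $\Lambda\cong\P^t$ (with the case $t(\Gamma)=n$ handled by applying Theorem \ref{thm:main2} directly). Your verifications of the two regularity hypotheses are correct, and your separate treatment of $t=1$, where Theorem \ref{thm:main2} formally requires $n\geq 2$, is a point the paper passes over. The only step that does not close as you sketch it is the degree bound $d_0\geq 4t^2+6t+1$: the inequalities you list yield $d_0>\frac{(2t+2)(d-1)}{2t+3}+t+1$ and $d-1>n(2t+3)$, hence only $d_0>(2t+2)n+t+1$, which for $n$ near $t$ is about $2t^2+3t$ --- short of the target by roughly a factor of two, and no amount of ceiling-tracking recovers the difference. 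This is not a defect of your route relative to the paper's, because the paper invokes Theorem \ref{thm:main2} without verifying its degree hypothesis at all; the honest resolution is that in the proof of Theorem \ref{thm:main2} the assumption $d\geq 4n^2+6n+1$ is used only to make the regularity hypothesis there non-vacuous, while every quantitative input of that proof (in particular the key inequality (\ref{ineq:3.1})) is derived from the regularity hypothesis alone, which you do verify for $\Gamma_0$. With that observation your descent is complete, and if anything more carefully documented than the original.
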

 
The proof of Theorem \ref{thm:main3} is in Section 4.

Regarding Problem $(*)$, if $\Gamma \subset \P^n$ is as in Theorem \ref{thm:main3} then it fails to be $(\rm{reg}(\Gamma)-1)$-regular since there exists a rational normal curve $C$ in $\P^{t}$ such that ${\rm{reg}}(\Gamma\cap C)={\rm{reg}}(\Gamma)$. \\
 
\noindent {\bf Organization of the paper.} In $\S 2$, we review some basic facts to study the Castelnuovo-Mumford regularity of finite schemes. In $\S 3$ and $\S 4$, we give a proof of Theorem \ref{thm:main1} and a proof of Theorem \ref{thm:main2}, respectively. Finally, in $\S 5$ we prove more properties of finite schemes in linearly general position which have maximal regularity.\\
 
\noindent {\bf Acknowledgement.} This work was supported by the National Research Foundation of Korea(NRF) grant funded by the Korea government(MSIT) (No. 2022R1A2C1002784).

\section{Preliminaries}
\noindent We fix a few notations, which we use throughout this paper.

\begin{notation and conventions}\label{notrmk:basicfacts}
Let ${\Gamma} \subset \P^n$ be a (possibly degenerate) finite subscheme, defined by a sheaf of ideals $\mathcal{I}_{\Gamma}$ in $\mathcal{O}_{\P^n}$. Also, let
$$I (\Gamma) := \bigoplus_{\ell \in \mathbb{Z}} H^0 (\P^n ,\mathcal{I}_{\Gamma} (\ell))$$
be the saturated homogeneous ideal of $\Gamma$ in the homogeneous coordinate ring $R := \mathbb{K} [x_0 , x_1 , \ldots, x_n ]$ of $\P^n$.

\renewcommand{\descriptionlabel}[1]%
{\hspace{\labelsep}\textrm{#1}}
\begin{description}
\setlength{\labelwidth}{13mm} \setlength{\labelsep}{1.5mm}
\setlength{\itemindent}{0mm}

\item[$(1)$] The length of $\Gamma$, defined to be the length of $\mathcal{O}_\Gamma = \mathcal{O}_{\P^n} /\mathcal{I}_{\Gamma}$, is equal to $h^{0}(\Gamma ,\mathcal{O}_\Gamma)$. It is also called the degree of $\Gamma$. We will denote it by $\vert \Gamma \vert$.

\item[$(2)$] For each $m \in \mathbb{Z}$, let
\[
\rho_m : H^0(\mathbb{P}^n , \mathcal{O}_{\mathbb{P}^n} (m))\rightarrow H^0({\Gamma} , \mathcal{O}_{{\Gamma}} (m))
\]
 be the natural restriction map. The Hilbert function $h_{\Gamma}$ of ${\Gamma}$ is defined by
$$h_{\Gamma} (m)= \dim_{\mathbb{K}} {\rm Im} ( \rho_m ).$$
We say that $\Gamma$ is $m$-\textit{normal} if $\rho_m$ is surjective, or equivalently, if $\Gamma$ is $(m+1)$-regular.

\item[$(3)$] Let $V$ be a hypersurface of $\P^n$. The intersection ${\Gamma}\cap{V}$ is defined to be the scheme defined by the ideal sheaf $(\mathcal I_{{\Gamma}}+\mathcal I_V )$.
Also, \textit{the residual scheme} of ${\Gamma}$ with respect to $V$, denoted by ${\Gamma} : V$, means the scheme defined by the ideal sheaf $(\mathcal I_{{\Gamma}} : \mathcal I_V )$.
\item[$(4)$] If $\dim\langle\Gamma\rangle\geq{1}$, we define $t(\Gamma)$ to be the largest integer $k$ such that $\dim \langle \Gamma'\rangle=\vert\Gamma'\vert-1$
 for any subscheme $\Gamma'$ of $\Gamma$ of degree $\vert\Gamma'\vert\leq k+1$.
It is elementary to see that $1\leq t(\Gamma)\leq\dim \langle \Gamma\rangle$.
For example, if $\vert\Gamma\vert\geq 3$, then the statement that $t(\Gamma)=1$ is equivalent to the statement that $\Gamma$ has a trisecant line.
\end{description}
\end{notation and conventions}

Next, we list a few well-known facts about finite subschemes in a projective space below.

\begin{proposition}\label{Basic facts}
Let ${\Gamma} \subset \P^n$ be a finite subscheme of degree $d$. Then

\renewcommand{\descriptionlabel}[1]%
{\hspace{\labelsep}\textrm{#1}}
\begin{description}
\setlength{\labelwidth}{13mm} \setlength{\labelsep}{1.5mm}
\setlength{\itemindent}{0mm}
\item[$(1)$] If ${\Gamma}$ is $m$-normal, then any subscheme ${\Gamma}'$ of ${\Gamma}$ is also $m$-normal.

\item[$(2)$] If ${\Gamma}'$ is a subscheme of ${\Gamma}$ such that $\dim~\langle{{\Gamma}'} \rangle = n'$ and $\vert{\Gamma}'\vert=d'$,
then
$$\dim ~\langle{{\Gamma}}\rangle \leq \min \{ n'+d-d' , n\}.$$

\item[$(3)$] Let ${\Gamma}_0$ be a subscheme of ${\Gamma}$ such that $\vert{\Gamma}_0\vert=d_0$. Then there exist subschemes ${\Gamma}_1$, ${\Gamma}_2$,$\ldots$, ${\Gamma}_{d-d_0-1}$ of $\Gamma$ such that
\begin{equation*}
{\Gamma}_0 \subsetneq {\Gamma}_1 \subsetneq \cdots \subsetneq {\Gamma}_{d-d_0-1} \subsetneq {\Gamma} \quad \mbox{and} \quad |{\Gamma}_i| = |{\Gamma}_0| + i.
\end{equation*}

\item[$(4)$] Let $V$ be a hypersurface of $\mathbb{P}^n$. Then
$$|{\Gamma}| = |{\Gamma} \cap V| + |{\Gamma}:V|.$$

\item[$(5)$] $($La méthode d’Horace, \cite{Hi}$)$ Let $V$ be a hypersurface of degree $k$. If ${\Gamma} \cap V$ is $m$-normal and ${\Gamma}:V$ is $(m-k)$-normal, then ${\Gamma}$ is $m$-normal.

\item[$(6)$] If $\Gamma$ is in linearly general position and lies on a rational normal curve of degree $n$, Then
\[
{\rm reg}({\Gamma}) =\left\lceil\frac{d-1}{n}\right\rceil+1.
\]
\end{description}
\end{proposition}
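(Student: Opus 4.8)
The plan is to reduce the computation of $\reg(\Gamma)$ to the Hilbert function $h_\Gamma$ and then to evaluate $h_\Gamma$ by transporting the problem to the rational normal curve. Since $\Gamma$ is a finite scheme, $h^0(\Gamma,\mathcal O_\Gamma(m)) = |\Gamma| = d$ for every $m$, so $\Gamma$ is $m$-normal precisely when $h_\Gamma(m) = d$; by the equivalence recorded in Notation and Conventions \ref{notrmk:basicfacts}(2) this is the same as $\Gamma$ being $(m+1)$-regular. As $h_\Gamma$ is nondecreasing, bounded above by $d$, and stabilizes at $d$, we get
\[
\reg(\Gamma) = 1 + \min\{\, m \geq 0 : h_\Gamma(m) = d \,\}.
\]
Thus everything comes down to finding the smallest $m$ with $h_\Gamma(m) = d$.

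To compute $h_\Gamma(m)$, write $C$ for the degree-$n$ rational normal curve containing $\Gamma$. A rational normal curve is projectively normal, so the restriction map $H^0(\P^n,\mathcal O_{\P^n}(m)) \to H^0(C,\mathcal O_C(m))$ is surjective for every $m \geq 0$. Because restriction to $\Gamma$ factors through restriction to $C$, the image of $\rho_m$ coincides with the image of $H^0(C,\mathcal O_C(m)) \to H^0(\Gamma,\mathcal O_\Gamma(m))$, so $h_\Gamma(m)$ is the rank of this latter map. I would then use the isomorphism $C \cong \P^1$ under which $\mathcal O_C(1) \cong \mathcal O_{\P^1}(n)$, hence $\mathcal O_C(m) \cong \mathcal O_{\P^1}(mn)$, and under which $\Gamma$ corresponds to an effective divisor $D$ of degree $d$ on the smooth curve $\P^1$. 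The restriction map becomes $H^0(\P^1,\mathcal O_{\P^1}(mn)) \to H^0(D,\mathcal O_{\P^1}(mn)|_D)$, whose kernel is the space of sections vanishing on $D$, namely $H^0(\P^1,\mathcal O_{\P^1}(mn-d))$. Therefore
\[
h_\Gamma(m) = (mn+1) - \max\{\,mn - d + 1,\;0\,\} = \min\{\,mn+1,\;d\,\} \qquad (m \geq 0).
\]

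It follows that $h_\Gamma(m) = d$ if and only if $mn + 1 \geq d$, i.e.\ $m \geq \lceil (d-1)/n \rceil$, so the displayed formula for $\reg(\Gamma)$ yields $\reg(\Gamma) = \lceil (d-1)/n \rceil + 1$, as claimed. The hypothesis that $\Gamma$ is in linearly general position enters only to guarantee that $\Gamma$ is nondegenerate, so that the computation genuinely lives in $\P^n$ and $d \geq n+1$; in fact nondegeneracy is automatic here, since any length-$(n+1)$ subscheme of a degree-$n$ rational normal curve already spans $\P^n$. Since the argument is otherwise pure bookkeeping, the only substantive input—and the step I would flag as the real content—is the projective normality of $C$, which is exactly what allows the Hilbert function of $\Gamma$ in $\P^n$ to be replaced by the transparent divisor computation on $C \cong \P^1$.
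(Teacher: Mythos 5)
Your proposal addresses only item $(6)$ of the six assertions in Proposition \ref{Basic facts}; items $(1)$--$(5)$ are not touched at all. That is the substantive gap: the statement under review is a package of six facts, and the paper's own proof, while terse, does dispose of each one --- $(1)$--$(3)$ by citation to \cite[Proposition 2.2]{LPW}, $(4)$ by the exact sequence $0 \rightarrow \big( S/(I_{{\Gamma}}:V) \big)(-k) \rightarrow S/I_{{\Gamma}} \rightarrow S/ \langle I_{{\Gamma}},V \rangle \rightarrow 0$ together with additivity of length, $(5)$ by citation to \cite{Hi}, and $(6)$ by citation to \cite[Proposition 3.4]{P}. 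Items $(4)$ and $(5)$ in particular carry real weight later in the paper (they underlie the Horace-method arguments in Sections 3 and 4), so a complete proof attempt cannot simply omit them.

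For item $(6)$ itself, your argument is correct and is in fact more self-contained than the paper's, which only gives a reference. The reduction $\reg(\Gamma) = 1 + \min\{ m \geq 0 : h_{\Gamma}(m) = d\}$ is valid for finite schemes (the higher cohomology $H^i(\P^n, \mathcal{I}_{\Gamma}(p-i))$ for $i \geq 2$ causes no obstruction in the relevant range, and surjectivity of $\rho_m$ propagates to $\rho_{m+1}$), the projective normality of the rational normal curve correctly lets you replace $\im(\rho_m)$ by the image of $H^0(C,\mathcal{O}_C(m)) \to H^0(\Gamma,\mathcal{O}_{\Gamma}(m))$, and the divisor computation on $\P^1$ giving $h_{\Gamma}(m) = \min\{mn+1, d\}$ is exactly right; this formula also justifies, a posteriori, the stabilization of $h_{\Gamma}$ that your first step assumed. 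Your side remark that linearly general position is automatic for subschemes of a rational normal curve is also correct. So the verdict is: a clean and genuinely independent proof of one sixth of the proposition, with the other five parts missing.
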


\begin{proof}
For $(1) \sim (3)$, we refer the reader to \cite[Proposition 2.2]{LPW}. The proof of $(4)$ comes immediately from the following exact sequence
\begin{equation*}
0 \rightarrow \big( S/(I_{{\Gamma}}:V) \big) (k) \rightarrow S/I_{{\Gamma}} \rightarrow S/ \langle I_{{\Gamma}},V \rangle \rightarrow 0
\end{equation*}
where $k$ is the degree of $V$. For $(5)$, see \cite[p.352]{Hi}.
For $(6)$, see \cite[Proposition 3.4]{P} (cf. \cite[Proposition 2.2]{N}, \cite[Proposition 2.3]{NP}).
\end{proof}

\begin{remark}\label{remark:Horace}
In this paper, we will apply the above-mentioned d'Horace Lemma as follows.
Let $\Gamma$ and $V$ be as in Proposition \ref{Basic facts}.$(5)$ such that ${\rm reg}({\Gamma}) > {\rm reg}({\Gamma\cap{V}})$. Then
$${\rm reg}({\Gamma}:V ) \geq {\rm reg}(\Gamma )-k.$$
Indeed, if not then $\Gamma \cap V$ is $({\rm reg}({\Gamma})-2)$-normal and $\Gamma:V$ is (${\rm reg}({\Gamma})-k-2)$-normal. Thus it follows by Proposition \ref{Basic facts}.$(5)$ that $\Gamma$ is (${\rm reg}({\Gamma})-2$)-normal, which is obviously a contradiction.
\end{remark}

\section{Proof of Theorem \ref{thm:main1}}
\noindent This section is devoted to giving a proof of Theorem \ref{thm:main1}.
We begin with a definition.

\begin{definition}\label{def:RPH condition}
We say that a nondegenerate finite subscheme $\Gamma \subset \P^n$ satisfies condition \textmd{RPH} (=regularity preserving hyperplane) if it admits a regularity preserving hyperplane section in the sense that there exists a hyperplane $H = \P^{n-1}$ such that the following two statements hold:
\begin{enumerate}
\item[$(i)$] $\Gamma\cap{H}$ is a nondegenerate subscheme of $H$;
\item[$(ii)$] the equality ${\rm reg}(\Gamma \cap H) = {\rm reg}(\Gamma)$ holds.
\end{enumerate}
\end{definition}

\begin{remark} \label{rm:RPH condition2}
$(1)$ Let $H$ be a hyperplane such that ${\rm reg}(\Gamma\cap{H})={\rm reg}(\Gamma)$ but $\Gamma\cap{H}$ fails to span $H$.
 In such a case, we can choose another hyperplane $H'$ containing $\Gamma \cap {H}$ and such that $\Gamma\cap{H'}$ is a nondegenerate subscheme of $H'$.
Then
\[
{\rm reg}(\Gamma)={\rm reg}(\Gamma\cap{H})\leq {\rm reg}(\Gamma\cap{H}')\leq {\rm reg}(\Gamma).
\]
That is, condition \textmd{RPH} holds for $\Gamma$ if there is a hyperplane which satisfies only condition $(ii)$ in Definition \ref{def:RPH condition}.

\noindent $(2)$ For example, if a finite subscheme $\Gamma \subset \P^n$ is in linearly general position and $|\Gamma| \geq n+2$, then ${\rm reg}(\Gamma)\geq 3$ and any nondegenerate hyperplane section of $\Gamma$ is $2$-regular.
Thus, $\Gamma$ fails to satisfy condition \textmd{RPH}.
\end{remark}

Our first goal in this section is to verify that $\Gamma \subset \P^n$ in Theorem \ref{thm:main1} satisfies condition \textmd{RPH}.

\begin{lemma}\label{lem:not preserving hyperplane section}
Let $\Gamma \subset \P^n$ be as in Theorem \ref{thm:main1} and let $H = \P^{n-1}$ be a hyperplane such that
\begin{equation*}
|\Gamma \cap H | \geq n+1 \quad \mbox{and} \quad {\rm reg}(\Gamma \cap H ) < {\rm reg}(\Gamma).
\end{equation*}
Then the following statements hold.
\smallskip

\renewcommand{\descriptionlabel}[1]%
             {\hspace{\labelsep}\textrm{#1}}
\begin{description}
\setlength{\labelwidth}{13mm}
\setlength{\labelsep}{1.5mm}
\setlength{\itemindent}{0mm}

\item[$(1)$] ${\rm reg}(\Gamma ) \geq 4$.
\item[$(2)$] ${\rm reg}(\Gamma:H) \geq {\rm reg}(\Gamma) -1$.
\item[$(3)$] $|\Gamma:H| - \dim  \langle\Gamma:H \rangle + t(\Gamma:H) \leq d-n-1$.
\item[$(4)$] $t(\Gamma:H) = t(\Gamma)$.
\end{description}
\end{lemma}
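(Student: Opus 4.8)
Set $t := t(\Gamma)$, $r := {\rm reg}(\Gamma)$, and write $\Gamma' := \Gamma : H$, with $d' := |\Gamma'|$, $n' := \dim\langle\Gamma'\rangle$ and $t' := t(\Gamma')$. Since $H$ is a hyperplane, the degree $k$ appearing in Proposition \ref{Basic facts}.$(5)$ and Remark \ref{remark:Horace} equals $1$ throughout. Part $(1)$ is immediate: as noted after Theorem \ref{thm:main1}, the hypothesis (\ref{eq:1}) forces $d \geq n + t + 2$, so $d - n - 1 \geq t + 1 \geq 2$ and hence $\lceil \tfrac{d-n-1}{t+1}\rceil \geq 1$; the lower inequality in (\ref{eq:1}) then gives $r \geq 1 + 3 = 4$. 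Part $(2)$ is a direct invocation of Remark \ref{remark:Horace} with $k = 1$: by hypothesis ${\rm reg}(\Gamma \cap H) < r$, so that remark yields ${\rm reg}(\Gamma : H) \geq r - 1$.

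For Part $(3)$ I would argue combinatorially, with no appeal to regularity. By Proposition \ref{Basic facts}.$(4)$ applied to $H$ we have $|\Gamma| = |\Gamma \cap H| + |\Gamma : H|$, so the hypothesis $|\Gamma \cap H| \geq n+1$ gives $d' = d - |\Gamma \cap H| \leq d - n - 1$. Moreover, Notation and Conventions \ref{notrmk:basicfacts}.$(4)$ gives $t' \leq \dim\langle\Gamma'\rangle = n'$, whence $-n' + t' \leq 0$. Adding, $d' - n' + t' \leq d' \leq d - n - 1$, which is exactly the claim.

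Part $(4)$ is the substance of the lemma, and I would split it into the two inequalities $t \leq t'$ and $t' \leq t$. The first is formal: the residual scheme $\Gamma' = \Gamma : H$ is a subscheme of $\Gamma$ (its ideal sheaf contains $\mathcal{I}_\Gamma$), so any subscheme of $\Gamma'$ of degree $\leq t+1$ is a subscheme of $\Gamma$ and is therefore in linearly general position by the definition of $t(\Gamma) = t$ in Notation and Conventions \ref{notrmk:basicfacts}.$(4)$; hence $t' \geq t$. For the reverse inequality I argue by contradiction, supposing $t' \geq t+1$. Combining Part $(2)$ with the general bound (\ref{ineq:kwak}) applied to the nondegenerate scheme $\Gamma' \subset \P^{n'} = \langle\Gamma'\rangle$ gives
\[
r - 1 \;\leq\; {\rm reg}(\Gamma') \;\leq\; \left\lceil \frac{d' - n' - 1}{t'}\right\rceil + 2,
\]
so $\lceil \tfrac{d'-n'-1}{t'}\rceil \geq r - 3$, which forces $d' - n' - 1 > (r-4)t'$ and hence $d' - n' \geq (r-4)t' + 2$. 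Substituting this into Part $(3)$, rewritten as $d' - n' \leq d - n - 1 - t'$, yields $(r-4)t' + 2 \leq d - n - 1 - t'$, that is, $(r-3)t' \leq d - n - 3$. On the other hand, the lower inequality in (\ref{eq:1}) gives $r - 3 \geq \tfrac{d-n-1}{t+1}$, and since $t' \geq t+1$ this forces $(r-3)t' \geq d - n - 1$. The bounds $(r-3)t' \leq d-n-3$ and $(r-3)t' \geq d-n-1$ contradict each other, so $t' \leq t$, and therefore $t' = t$.

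The step I expect to need the most care is the application of (\ref{ineq:kwak}) to $\Gamma'$ inside Part $(4)$, since that bound is stated for schemes whose degree exceeds the dimension of their span by at least $3$. I would first record that Part $(2)$ gives ${\rm reg}(\Gamma') \geq r - 1 \geq 3$, so $\Gamma'$ spans a positive-dimensional linear space (hence $t'$ is defined) and $d' \geq n' + 2$, since a nondegenerate finite scheme in $\P^{n'}$ is $2$-regular precisely when $d' = n'+1$. The only borderline case $d' = n'+2$ forces $r = 4$, and there the lower bound of (\ref{eq:1}) gives $d - n - 1 \leq t+1$, which together with $d \geq n+t+2$ yields $d = n+t+2$; Part $(3)$ then reads $2 + t' \leq d-n-1 = t+1$, i.e. $t' \leq t-1$, already contradicting $t' \geq t+1$, so the argument is unaffected.
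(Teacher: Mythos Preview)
Your argument is correct. Parts $(1)$--$(3)$ match the paper verbatim. For Part $(4)$ you and the paper both prove the two inequalities $t' \geq t$ and $t' \leq t$, but you handle them in opposite ways. For $t' \geq t$ you use the formal observation that $\Gamma:H$ is a subscheme of $\Gamma$, so every subscheme of $\Gamma:H$ of degree $\leq t+1$ is already governed by $t(\Gamma)$; this is cleaner than the paper's route, which reaches the same conclusion through a case split on whether $t' < n'$ or $t' = n'$ and an explicit secant-plane argument. For $t' \leq t$ the paper is slightly slicker: it feeds Part $(3)$ directly into (\ref{ineq:kwak}) to obtain $\left\lceil\tfrac{d-n-1}{t+1}\right\rceil + 2 \leq \left\lceil\tfrac{d-n-1}{t'}\right\rceil + 1$, and a one-line ceiling comparison finishes; your numerical contradiction with $(r-3)t'$ is longer but equally valid.

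One small point: your sentence ``the only borderline case $d' = n'+2$ forces $r=4$'' tacitly uses ${\rm reg}(\Gamma') \leq 3$ in that case. This is true (it is the bound ${\rm reg}(\Gamma') \leq d' - n' + 1$, which holds for every nondegenerate finite scheme and does not need the hypothesis $d' \geq n'+3$ of (\ref{ineq:kwak})), but it would be worth saying so explicitly. In fact, once you grant that bound your main inequality chain already covers the case $d' = n'+2$ without a separate analysis, so you could simply note this and drop the last paragraph.
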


\begin{proof}
$(1)$ Since $d \geq n+t(\Gamma)+2$, we have ${\rm reg}(\Gamma) \geq 4$  from the first inequalities in (\ref{eq:1}).
\noindent $(2)$ See Remark \ref{remark:Horace}.

\noindent $(3)$ It always holds that $t(\Gamma:H) \leq \dim \langle \Gamma:H \rangle$. This shows that
\begin{align*}
n-\dim \langle\Gamma:H \rangle + t(\Gamma:H) &< n+1 \\
&\leq |\Gamma \cap H| \\
&= d- |\Gamma:H|.
\end{align*}
Thus we get the desired inequality $|\Gamma:H| - \dim  \langle\Gamma:H \rangle + t(\Gamma:H) \leq d-n-1$.

\noindent $(4)$ First, note that $\dim\langle{\Gamma:H}\rangle\geq 1$ since ${\rm reg}(\Gamma:H)>1$.  By inequality (\ref{ineq:kwak}) and Lemma \ref{lem:not preserving hyperplane section}.$(3)$, we have
\begin{align*}
{\rm reg}(\Gamma:H) &\leq \left\lceil \frac{|\Gamma:H|-\dim \langle \Gamma:H \rangle -1 }{t(\Gamma:H)} \right\rceil +2 \\
&\leq \left\lceil \frac{d-n-1-t(\Gamma:H)}{t(\Gamma:H)} \right\rceil +2 = \left\lceil \frac{d-n-1}{t(\Gamma:H)} \right\rceil+1.
\end{align*}
Then, we can combine this result with inequalities (\ref{eq:1}) and Lemma \ref{lem:not preserving hyperplane section}.$(2)$ to deduce that
\begin{equation*}
\left\lceil \frac{d-n-1}{t(\Gamma)+1} \right\rceil +2  \leq {\rm reg}(\Gamma)-1 \leq {\rm reg}(\Gamma:H) \leq \left\lceil \frac{d-n-1}{t(\Gamma:H)} \right\rceil+1 .
\end{equation*}
Here, the inequality
\begin{equation*}
\left\lceil \frac{d-n-1}{t(\Gamma)+1} \right\rceil +2  \leq \left\lceil \frac{d-n-1}{t(\Gamma:H)} \right\rceil+1
\end{equation*}
implies that $t(\Gamma:H) \leq t(\Gamma)$.
To see this, suppose that $t(\Gamma:H) < t(\Gamma)$ for the sake of contradiction.
Since $\dim\langle{\Gamma:H}\rangle\geq 1$, we have $t(\Gamma:H) \leq \dim \langle \Gamma:H \rangle$ by the definition of $t(\Gamma:H)$.
Remark that
\[
|\Gamma:H|> \dim\langle\Gamma:H\rangle+1
\]
since ${\rm reg}(\Gamma:H)\geq 3$.
If $t(\Gamma:H) < \dim \langle \Gamma:H \rangle$, then $\Gamma:H$ admits a $t(\Gamma:H)$-dimensional subspace $\Lambda$ such that $\vert\Lambda\cap{(\Gamma:H)}\vert\geq t(\Gamma:H)+2$ hence so does $\Gamma$.
This is impossible since $t(\Gamma:H)<t(\Gamma)$.
Thus, we may assume that $t(\Gamma:H) = \dim \langle \Gamma:H \rangle$.
Then, it immediately implies that $\dim \langle \Gamma:H \rangle$ is strictly less than $t(\Gamma)$.
If $\vert \Gamma:H \vert \geq \dim \langle \Gamma:H \rangle+2$, then $t(\Gamma)\leq t(\Gamma:H)$.
 This implies that $\vert \Gamma:H \vert = \dim \langle \Gamma:H \rangle+1$ and hence ${\rm reg}(\Gamma:H)=2$.
This is a contradiction since
\[
{\rm reg}(\Gamma:H) \geq {\rm reg}(\Gamma)-1 \geq 3
\]
 by Lemma \ref{lem:not preserving hyperplane section}.$(1)$ and $(2)$.
Therefore, we can deduce that $t(\Gamma:H) = t(\Gamma)$.
\end{proof}

\begin{theorem}\label{thm:Thm RPH}
Let $\Gamma \subset \P^n$ be a nondegenerate finite subscheme of degree $d$. If $t(\Gamma) \leq n-1$ and
\begin{equation}\label{eq:Thm RPH}
\left\lceil \frac{d-n-1}{t(\Gamma)+1} \right\rceil +3 \leq {\rm reg}(\Gamma) \leq \left\lceil \frac{d-n-1}{t(\Gamma)} \right\rceil +2,
\end{equation}
then $\Gamma \subset \P^n$ satisfies condition \rm\textmd{RPH}.
\end{theorem}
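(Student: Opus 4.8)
The plan is to recast condition $\textmd{RPH}$ as the existence of a \emph{non-spanning} subscheme of $\Gamma$ that already attains the full regularity, and then to force such a subscheme into existence by an extremal argument. By Remark \ref{rm:RPH condition2}.$(1)$ it suffices to produce a single hyperplane $H$ with $\reg(\Gamma\cap H)=\reg(\Gamma)$, and for this it is enough to find a subscheme $\Gamma^{\ast}\subseteq\Gamma$ with $\dim\langle\Gamma^{\ast}\rangle\le n-1$ and $\reg(\Gamma^{\ast})=\reg(\Gamma)$: choosing any hyperplane $H\supseteq\langle\Gamma^{\ast}\rangle$ gives $\Gamma^{\ast}\subseteq\Gamma\cap H$, so Proposition \ref{Basic facts}.$(1)$ yields $\reg(\Gamma\cap H)\ge\reg(\Gamma^{\ast})=\reg(\Gamma)$, while the reverse inequality is automatic since $\Gamma\cap H$ is a subscheme of the $\reg(\Gamma)$-regular scheme $\Gamma$. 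Thus the theorem reduces to the single assertion that $\Gamma$ possesses a degenerate subscheme of maximal regularity.

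To prove this I would argue by contradiction: assume that \emph{every} subscheme of $\Gamma$ of regularity $r:=\reg(\Gamma)$ spans $\P^{n}$, and among all of them pick one, $\Gamma^{\ast}$, of least degree. Then $\Gamma^{\ast}$ is nondegenerate, and since every length-$\le t(\Gamma)+1$ subscheme of $\Gamma$ (hence of $\Gamma^{\ast}$) is in linearly general position, $t(\Gamma^{\ast})\ge t:=t(\Gamma)$. The easy alternative is $t(\Gamma^{\ast})\ge t+1$: here (\ref{ineq:kwak}) applied to $\Gamma^{\ast}$ gives
\[
r=\reg(\Gamma^{\ast})\le\Big\lceil\tfrac{|\Gamma^{\ast}|-n-1}{t+1}\Big\rceil+2\le\Big\lceil\tfrac{d-n-1}{t+1}\Big\rceil+2,
\]
contradicting the left-hand inequality of (\ref{eq:Thm RPH}). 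Hence the only surviving possibility is $t(\Gamma^{\ast})=t$.

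In that case $\Gamma^{\ast}$ carries a $(t+2)$-secant $t$-plane $\Lambda$, and since $t\le n-1$ I can enlarge $\Lambda$ to a hyperplane $H^{\ast}\supseteq\Lambda$ meeting $\Gamma^{\ast}$ in length $\ge n+1$. Now $\Gamma^{\ast}\cap H^{\ast}$ is degenerate, so by the standing assumption $\reg(\Gamma^{\ast}\cap H^{\ast})\le r-1$; and $\Gamma^{\ast}:H^{\ast}$ is a \emph{proper} subscheme of the least-degree $\Gamma^{\ast}$, so $\reg(\Gamma^{\ast}:H^{\ast})\le r-1$ too. The contrapositive of la méthode d'Horace (Proposition \ref{Basic facts}.$(5)$; cf.\ Remark \ref{remark:Horace}) gives
\[
r=\reg(\Gamma^{\ast})\le\max\{\reg(\Gamma^{\ast}\cap H^{\ast}),\ \reg(\Gamma^{\ast}:H^{\ast})+1\},
\]
so a contradiction follows once $\reg(\Gamma^{\ast}:H^{\ast})\le r-2$, i.e.\ once removing a sufficiently fat hyperplane section drops the regularity by at least two. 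Here Lemma \ref{lem:not preserving hyperplane section} is the engine (note that $|\Gamma^{\ast}|\le d$ makes $\Gamma^{\ast}$ again satisfy the window (\ref{eq:Thm RPH}), so the lemma applies): $H^{\ast}$ is section-dropping of large length, whence $t(\Gamma^{\ast}:H^{\ast})=t$, and part $(3)$ refined to a section of length $n+1+e$ gives $|\Gamma^{\ast}:H^{\ast}|-\dim\langle\Gamma^{\ast}:H^{\ast}\rangle\le(|\Gamma^{\ast}|-n)-1-e-t$; feeding this into (\ref{ineq:kwak}) is designed to push $\reg(\Gamma^{\ast}:H^{\ast})$ below $r-1$ as soon as $e$ is large enough.

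The hard part is precisely this quantitative gain of two: it requires that the extremal $\Gamma^{\ast}$ admit a section exceeding $n+1$ by roughly $t$, equivalently that its secant $t$-plane be \emph{rich} ($|\Gamma^{\ast}\cap\Lambda|$ of order $2t$ rather than the bare $t+2$) and that the least-degree choice keep Kwak's bound (\ref{ineq:kwak}) near-tight for $\Gamma^{\ast}$. Converting the near-maximality of $r$ into this concentration of $\Gamma^{\ast}$ along a $t$-dimensional locus is the main obstacle, and I expect it to absorb most of the work; the boundary case $\dim\langle\Gamma^{\ast}\rangle=t$ (where $\Gamma^{\ast}$ lies in linearly general position in $\P^{t}$) and the smallest admissible $d$ should be handled directly via Proposition \ref{Basic facts}.$(6)$ together with (\ref{ineq:kwak}).
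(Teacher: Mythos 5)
Your reduction to producing a degenerate subscheme $\Gamma^{\ast}\subseteq\Gamma$ with $\reg(\Gamma^{\ast})=\reg(\Gamma)$ is correct, and so is the dichotomy on $t(\Gamma^{\ast})$: the case $t(\Gamma^{\ast})\geq t+1$ is indeed killed by Kwak's bound and the left inequality of (\ref{eq:Thm RPH}), and the extremal choice of $\Gamma^{\ast}$ does give $\reg(\Gamma^{\ast}:H^{\ast})\leq r-1$. But the proof then stops exactly where the theorem actually lives: la m\'ethode d'Horace with a hyperplane needs the residual to drop by \emph{two} ($\reg(\Gamma^{\ast}:H^{\ast})\leq r-2$), and minimality only buys you a drop of one. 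Your proposed repair --- enrich the hyperplane section to length $n+1+e$ with $e$ ``of order $2t$'' --- is quantitatively insufficient. Running your own estimate, $|\Gamma^{\ast}:H^{\ast}|\leq d-n-1-e$ and $t(\Gamma^{\ast}:H^{\ast})=t$ give via (\ref{ineq:kwak}) only $\reg(\Gamma^{\ast}:H^{\ast})\leq\left\lceil\frac{d-n-e-2}{t}\right\rceil+1$, and forcing this below $r-2$ when $r$ sits at the bottom of the window, $r=\left\lceil\frac{d-n-1}{t+1}\right\rceil+3$, requires $e\gtrsim\frac{d-n-1}{t+1}$, i.e.\ the hyperplane must swallow a fixed positive fraction of $\Gamma^{\ast}$. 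Nothing in the hypothesis produces a secant $t$-plane that rich, so the ``main obstacle'' you flag is not a technical residue but the whole content of the statement, and the route you sketch for it does not close.

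For comparison, the paper does not attempt the drop of two at all. It inducts on $d$: choosing $H_{0}$ with $|\Gamma\cap H_{0}|\geq n+1$, Lemma \ref{lem:not preserving hyperplane section} shows the residual $\Gamma:H_{0}$ again satisfies the window (\ref{eq:Thm RPH}) with the \emph{same} invariant $t$, so the induction hypothesis (or degeneracy of $\Gamma:H_{0}$) manufactures a hyperplane $H'$ with $\reg(\Gamma\cap H')=r-1$ exactly. The contradiction is then purely numerical and runs in the opposite direction from yours: since $\reg(\Gamma:H')\geq r-1$ is large, Kwak's bound applied to $\Gamma:H'$ forces $|\Gamma:H'|$ to be large, hence $|\Gamma\cap H'|<n+\left\lceil\frac{d-n-1}{t+1}\right\rceil$ is small, and the crude estimate $\reg(\Gamma\cap H')\leq|\Gamma\cap H'|-(n-1)+1$ then contradicts $r\geq\left\lceil\frac{d-n-1}{t+1}\right\rceil+3$. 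If you want to salvage your extremal setup, you would need to import something like this ``large residual regularity forces small hyperplane section'' step rather than trying to make the section large.
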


\begin{proof}
It suffices to show that there exists a hyperplane $H$ such that ${\rm reg}(\Gamma\cap{H})={\rm reg}(\Gamma)$ by $(1)$ in Remark \ref{rm:RPH condition2}. Since ${\rm reg}(\Gamma)$ satisfies the inequalities in (\ref{eq:Thm RPH}) we have $d \geq n+ t(\Gamma)+2$.
Keeping this in mind, we will use induction on $d$.

First, consider the initial case that $d=n+t(\Gamma)+2$. We can choose a hyperplane $H$ such that $\vert\Gamma \cap H\vert \geq n+1$ since $\Gamma$ is not in linearly general position(i.e., $t(\Gamma)\leq n-1$). Now, it is easy to see that $\vert\Gamma:H\vert \leq d-n-1 = t(\Gamma)+1$ and hence 
$$\dim \langle \Gamma:H \rangle \leq |\Gamma:H|-1 \leq t(\Gamma).$$
The inequality $\dim \langle \Gamma:H \rangle \leq t(\Gamma)$ implies that $\dim \langle \Gamma:H \rangle = |\Gamma:H|-1$ by the definition of $t(\Gamma)$. Thus, we conclude that $1\leq {\rm reg}(\Gamma:H)\leq 2$. On the other hand, it follows from inequalities in (\ref{eq:Thm RPH}) that ${\rm reg}(\Gamma)=4$. Therefore, we deduce that ${\rm reg}(\Gamma \cap H ) =4$ by the Horace method (cf. Proposition \ref{Basic facts}.(5)).

Next, we assume that $d > n+t(\Gamma)+2$. In order to derive a contradiction, we suppose that there is no hyperplane $H$ such that ${\rm reg}(\Gamma\cap{H})={\rm reg}(\Gamma)$. As before, we can choose a hyperplane $H_0$ such that $\vert\Gamma \cap H_0\vert \geq n+1$ since $\Gamma$ is not in linearly general position. We have  ${\rm reg}(\Gamma)-1\leq {\rm reg}(\Gamma:H_0)$ by the Horace method.

We claim that there is a hyperplane $H' = \P^{n-1}$ such that ${\rm reg}(\Gamma \cap H' ) = {\rm reg}(\Gamma) -1$. To see this, consider the case that $\dim \langle \Gamma:H_0 \rangle < n$. From this, we can choose a hyperplane $H'$ containing a subscheme $\Gamma:H_0$.
Then,
\begin{equation*}
{\rm reg}(\Gamma)-1 \leq {\rm reg}(\Gamma:H_0) \leq {\rm reg}(\Gamma \cap H' ) < {\rm reg}(\Gamma)
\end{equation*}
and hence we get the desired equality
\begin{equation*}
{\rm reg}(\Gamma \cap H' ) = {\rm reg}(\Gamma)-1.
\end{equation*}
Next, consider the other case that $\dim \langle \Gamma:H_0 \rangle = n$. From Lemma \ref{lem:not preserving hyperplane section}.$(3)$, we have
\begin{equation*}
|\Gamma:H_0| - \dim  \langle \Gamma:H_0 \rangle + t(\Gamma:H_0) \leq d-n-1.
\end{equation*}
Remark that $t(\Gamma:H_0) = t(\Gamma)$ by Lemma \ref{lem:not preserving hyperplane section}.$(4)$.
Then we obtain
\begin{equation*}
\frac{|\Gamma:H_0| - \dim  \langle \Gamma:H_0 \rangle -1}{t(\Gamma)+1} +1  \leq  \frac{d-n-1}{t(\Gamma)+1}.
\end{equation*}
Thus it holds that
\begin{equation*}
\left \lceil \frac{|\Gamma:H_0| - \dim  \langle \Gamma:H_0 \rangle -1}{t(\Gamma)+1} \right\rceil +2  \leq  \left \lceil \frac{d-n-1}{t(\Gamma)+1} \right\rceil +1\leq {\rm reg}(\Gamma)-2 \leq {\rm reg}(\Gamma:H_0)-1.
\end{equation*}
In particular, it is shown that
\begin{equation*}
\left \lceil \frac{|\Gamma:H_0| - \dim  \langle \Gamma:H_0 \rangle -1}{t(\Gamma)+1} \right\rceil +3  \leq  {\rm reg}(\Gamma:H_0).
\end{equation*}
By induction hypothesis, there is a hyperplane $H' = \P^{n-1}$ such that
\[
{\rm reg}((\Gamma:H_0) \cap H' ) = {\rm reg}(\Gamma:H_0).
\]
It follows that
\begin{equation*}
{\rm reg}(\Gamma)-1 \leq {\rm reg}(\Gamma:H_0)= {\rm reg}( (\Gamma:H_0)\cap H') \leq {\rm reg}(\Gamma \cap H' ) \leq {\rm reg}(\Gamma)-1
\end{equation*}
and hence we get the desired equality ${\rm reg}(\Gamma \cap H' ) = {\rm reg}(\Gamma)-1$. Moreover, we may assume that $\Gamma\cap{H'}$ is nondegenerate in $H'$(cf. see in Remark \ref{rm:RPH condition2}).

For this hyperplane $H'$, we obtain the following inequalities :
\begin{equation*}
\left \lceil{\frac{d-n-1}{t(\Gamma)+1}}\right \rceil+2\leq {\rm reg}(\Gamma)-1 \leq {\rm reg}(\Gamma:{H'}) \leq \left \lceil{\frac{\vert{\Gamma:{H'}}\vert-\dim\langle \Gamma:{H'}\rangle-1}{t(\Gamma:{H'})}}\right \rceil+2
\end{equation*}
(cf. inequalities (\ref{eq:Thm RPH}) and Lemma \ref{lem:not preserving hyperplane section}.$(2)$).
From the inequality \begin{equation*}
\left \lceil{\frac{d-n-1}{t(\Gamma)+1}}\right \rceil+2\leq  \left \lceil{\frac{\vert{\Gamma:{H'}}\vert-\dim\langle \Gamma:{H'}\rangle-1}{t(\Gamma:{H'})}}\right \rceil+2,
\end{equation*} we can derive the inequality
\begin{equation*}
\frac{d-n-1}{t(\Gamma)+1} < \frac{|\Gamma:{H'}|- \dim \langle \Gamma:{H'} \rangle -1}{t(\Gamma)}+1
\end{equation*}
since $t(\Gamma)=t(\Gamma:{H'})$ by Lemma \ref{lem:not preserving hyperplane section}.$(4)$.
Since $t(\Gamma) = t(\Gamma:{H'}) \leq \dim \langle \Gamma:{H'} \rangle$ and $|\Gamma:{H'}| = d-|\Gamma\cap{H'}|$ , we can derive the follwoing inequalities :
\begin{equation*}
|\Gamma\cap{H'}| < n-\dim \langle \Gamma:{H'} \rangle  +t(\Gamma)+\frac{d-n-1}{t(\Gamma)+1} \leq n+ \left\lceil \frac{d-n-1}{t(\Gamma)+1} \right\rceil.
\end{equation*}
Note that ${\rm reg}(\Gamma\cap{H'}) \leq |\Gamma\cap{H'}|-(n-1)+1$ from the inequality (\ref{ineq:kwak}).

Thus we can deduce that
\begin{equation*}
{\rm reg}(\Gamma) = {\rm reg}(\Gamma\cap{H'}) +1 \leq |\Gamma\cap{H'}|-(n-1)+2 < 3+\left \lceil{\frac{d-n-1}{t(\Gamma)+1}}\right \rceil,
\end{equation*}
which contradicts our assumption (\ref{eq:Thm RPH}).
This shows that $\Gamma \subset \P^n$ satisfies condition \textmd{RPH}.
\end{proof}

$\newline$
\noindent {\bf Proof of Theorem \ref{thm:main1}.} By Theorem \ref{thm:Thm RPH}, there is a hyperplane $H$ such that $\Gamma\cap{H}$ is a nondegenerate subscheme of $H$ and ${\rm reg}(\Gamma\cap{H})={\rm reg}(\Gamma)$.
Then, we have
\begin{align*}
\left \lceil{\frac{\vert\Gamma\cap{H}\vert-\text{dim}\langle\Gamma\cap{H}\rangle-1}{t(\Gamma\cap{H})+1}}\right \rceil+2
&\leq\left \lceil{\frac{d-n-1}{t(\Gamma)+1}}\right \rceil+2 \\
&<{\rm reg}(\Gamma)\\
&={\rm reg}(\Gamma\cap{H}) \\
&\leq\left \lceil{\frac{\vert\Gamma\cap{H}\vert-\text{dim}\langle\Gamma\cap{H}\rangle-1}{t(\Gamma\cap{H})}}\right \rceil+2.
\end{align*}
Since $\vert\Gamma\cap{H}\vert-\text{dim}\langle\Gamma\cap{H}\rangle-1\leq d-n-1$, the inequality
\[
\left \lceil{\frac{d-n-1}{t(\Gamma)+1}}\right \rceil+2
<\left \lceil{\frac{\vert\Gamma\cap{H}\vert-\text{dim}\langle\Gamma\cap{H}\rangle-1}{t(\Gamma\cap{H})}}\right \rceil+2
\]implies that ${t(\Gamma\cap{H})}\leq t(\Gamma)$.
As in the proof of Lemma \ref{lem:not preserving hyperplane section}.(4), we see that $t(\Gamma\cap{H})=t(\Gamma)$.
If $t(\Gamma\cap{H})<n-1$, then we can use Theorem \ref{thm:Thm RPH} again for $\Gamma\cap{H}$.
Therefore, we can obtain the desired existence by using Theorem \ref{thm:Thm RPH} $n-t(\Gamma)$ times.

To prove the uniqueness, we suppose that there are two distinct $t(\Gamma)$-dimensional linear spaces $\Lambda_1$ and $\Lambda_2$ satisfying conditions (i),(ii) in Theorem\ref{thm:main1}.
Let $d_1$ and $d_2$ denote $\vert\Gamma\cap\Lambda_1\vert$ and  $\vert\Gamma\cap\Lambda_2\vert$ respectively.
Then
\begin{equation}
\left \lceil{\frac{d-n-1}{t(\Gamma)+1}}\right \rceil+3\leq {\rm reg}(\Gamma)= {\rm reg}(\Gamma\cap\Lambda_i) \leq \left \lceil{\frac{d_i-1}{t(\Gamma)}}\right \rceil+2
\end{equation}
for $i=1,2$.
From this, we get the inequalities $\frac{d-n-1}{t(\Gamma)+1}+1<\frac{d_i-1}{t(\Gamma)}$ for $i=1,2$.
These imply that
\begin{equation}\label{ieq:uniqueness}
t(\Gamma)\left(\frac{d-n-1}{t(\Gamma)+1}\right)+1<d_i
\end{equation} for $i=1,2$.
Let $r$ denote the dimension of $\Lambda_1\cap\Lambda_2$, with the convention that $r=-1$ when $\Lambda_1\cap\Lambda_2=\emptyset$.
Thus, the dimension of the linear space spanned by $\Lambda_1\cup\Lambda_2$ is $2t-r$.
Since $\Gamma\cap\Lambda_1$ is in linearly general position in $\Lambda_1$, the degree of $\Gamma\cap\Lambda_1\cap\Lambda_2$ is less than or equal to $r+1$.
We can obtain an inequality
\begin{equation*}
d_1+d_2-(r+1)\leq\vert(\Gamma\cap\Lambda_1)\cup(\Gamma\cap\Lambda_2)\vert\leq\vert\Gamma\cap\langle\Lambda_1\cup\Lambda_2\rangle\vert.
\end{equation*}
Note that $\vert\Gamma\vert-\vert\Gamma\cap(\Lambda_1\cup\Lambda_2)\vert\geq n-\dim\langle\Lambda_1\cup\Lambda_2\rangle$ since $\Gamma$ is nondegenerate in $\mathbb{P}^n$.
Then, we can deduce that
\[
d-(d_1+d_2-(r+1))\geq n-(2t(\Gamma)-r).
\]
Combined with (\ref{ieq:uniqueness}), we can derive an inequality
\[
2t(\Gamma)\left(\frac{d-n+t(\Gamma)}{t(\Gamma)+1}\right)+1+n-2t(\Gamma)<d.
\]
Then we have $2t(\Gamma)(d-n+t(\Gamma))+(1+n-2t(\Gamma))(t(\Gamma)+1)<d(t(\Gamma)+1)$, and hence
\[
0<(d-n-1)(1-t(\Gamma)).
\]
 Since $d>n+1$, we have $t(\Gamma)<1$.
Obviously, this is a contradiction. This completes the proof.
\qed

\section{Proof of Theorem \ref{thm:main2}}
\noindent This section is primarily devoted to giving a proof of Theorem \ref{thm:main2}, but also includes a proof of Theorem \ref{thm:main3}.
We begin with the following interesting observation.

\begin{lemma}\label{lem:lemma1}
Let ${\Gamma} \subset \mathbb{P}^n$ be a finite subscheme of degree $d$ in linearly general position and let $m$ be a nonnegative integer such that
$$d \geq (m+3)n \quad \mbox{and} \quad {\rm reg}({\Gamma}) = \left\lceil\frac{d-1}{n}\right\rceil-{m}+1.$$
Let $Q \subset \mathbb{P}^n$ be a quadric such that $\vert{\Gamma}\cap{Q}\vert\geq ({m}+3)n$.
Then
$${\rm reg}({\Gamma}\cap{Q})={\rm reg}({\Gamma}) \quad \mbox{and} \quad \vert{\Gamma}\cap{Q}\vert>d-({m}+1)n.$$
\end{lemma}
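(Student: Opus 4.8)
The plan is to prove the two assertions in turn, the regularity statement first and then the degree bound, the former being the substantive one. Throughout I set $e := |\Gamma \cap Q|$ and $d' := |\Gamma : Q|$, so that $d = e + d'$ by Proposition \ref{Basic facts}.$(4)$, and write $R := \reg(\Gamma)$. Since $\Gamma$ is in linearly general position we have $t(\Gamma) = n$, so inequality (\ref{ineq:kwak}) applied in the span of any nondegenerate subscheme $W \subseteq \Gamma$ gives $\reg(W) \le \lceil (|W|-1)/n_W \rceil + 1$, where $n_W := \dim \langle W \rangle$ (for the few small values $|W| \in \{n_W+1, n_W+2\}$ this bound is checked by hand). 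Both $\Gamma \cap Q$ and $\Gamma : Q$ are subschemes of $\Gamma$, hence again in linearly general position. Finally, the hypothesis $d \ge (m+3)n$ forces $R = \lceil (d-1)/n\rceil - m + 1 \ge 4$.

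The crux is the single inequality $\reg(\Gamma : Q) \le R - 3$. Granting it, the conclusion $\reg(\Gamma \cap Q) = R$ follows at once: $\reg(\Gamma:Q)\le R-3$ says $\Gamma:Q$ is $(R-4)$-normal, while $\reg(\Gamma) = R$ says $\Gamma$ is not $(R-2)$-normal; so by the contrapositive of the method d'Horace (Proposition \ref{Basic facts}.$(5)$, with the quadric $Q$ in the role of the degree-$2$ hypersurface) the section $\Gamma \cap Q$ cannot be $(R-2)$-normal, i.e. $\reg(\Gamma\cap Q) \ge R$. Since $\Gamma\cap Q \subseteq \Gamma$ is automatically $R$-regular by Proposition \ref{Basic facts}.$(1)$, equality holds.

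To establish $\reg(\Gamma:Q) \le R-3$ I would split on $n' := \dim\langle \Gamma : Q\rangle$. If $\Gamma:Q$ is nondegenerate ($n' = n$) the bound above reads $\reg(\Gamma:Q) \le \lceil (d'-1)/n\rceil + 1$, and it remains to compare ceilings: from $e = d - d' \ge (m+3)n$ one gets $\lceil (d-1)/n\rceil - \lceil (d'-1)/n \rceil \ge (e-n+1)/n > m+2$, hence $\ge m+3$, which is exactly $\reg(\Gamma:Q) \le \lceil(d-1)/n\rceil - m - 2 = R-3$. If instead $\Gamma:Q$ is degenerate ($n' < n$), linear general position is decisive: every subscheme of $\Gamma$ of degree at most $n+1$ is linearly independent, so a subscheme of $\Gamma:Q$ of degree $n'+2 \le n+1$ could not lie in the $n'$-plane $\langle\Gamma:Q\rangle$; therefore $d' \le n'+1 \le n$ and $\reg(\Gamma:Q)\le 2$. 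When $R \ge 5$ this already gives $\reg(\Gamma:Q)\le 2 \le R-3$, and when $R=4$ the equality $\lceil (d-1)/n\rceil = m+3$ forces $d \le (m+3)n+1$, whence $d' = d - e \le 1$ and $\reg(\Gamma:Q)\le 1 = R-3$.

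For the degree bound, now that $\reg(\Gamma\cap Q) = R$ is known, note $\Gamma\cap Q$ is nondegenerate (as $e \ge (m+3)n > n+1$, any degree-$(n+1)$ subscheme spans $\P^n$) and in linearly general position, so $R \le \lceil (e-1)/n\rceil + 1$. Substituting $R = \lceil(d-1)/n\rceil - m + 1$ yields $\lceil(d-1)/n\rceil - \lceil(e-1)/n\rceil \le m$, and the elementary estimate $\lceil(d-1)/n\rceil - \lceil(e-1)/n\rceil \ge (d - e - n + 1)/n = (d'-n+1)/n$ then gives $d' \le (m+1)n - 1 < (m+1)n$, i.e. $|\Gamma\cap Q| = d - d' > d - (m+1)n$, as desired. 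The main obstacle is the control of $\reg(\Gamma:Q)$: the nondegenerate case hinges on the ceiling arithmetic being tight exactly at the threshold $e \ge (m+3)n$, while the degenerate case is where the linear-general-position hypothesis does the real work, collapsing the residual scheme to degree at most $n$.
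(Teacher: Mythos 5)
Your proposal is correct and follows essentially the same route as the paper: both reduce the first claim to the bound $\reg(\Gamma:Q)\leq\reg(\Gamma)-3$ via the méthode d'Horace, both dispose of the residual scheme by the same case split (your division by $\dim\langle\Gamma:Q\rangle$ coincides, under linear general position, with the paper's division by $|\Gamma:Q|\leq n$ versus $|\Gamma:Q|\geq n+1$), and both obtain the degree bound from the same ceiling estimate, which you do directly where the paper argues by contradiction. No gaps.
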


\begin{proof}
If $\Gamma \subset Q$, then we are done. Suppose that $\Gamma \nsubseteq Q$, and hence $\vert{\Gamma}:{Q}\vert >0$. Note that ${\Gamma}\cap{Q}$ and ${\Gamma}:Q$ are in linearly general position. Also,
$$d=|\Gamma| = |\Gamma \cap Q| + |\Gamma :Q | \geq (m+3)n+|\Gamma:Q|$$
and hence
$${\rm reg}({\Gamma}) = \left\lceil\frac{d-1}{n}\right\rceil-{m}+1 \geq  \left\lceil\frac{|\Gamma:Q|-1}{n}\right\rceil+4.$$
Concerning ${\Gamma}:Q$, all cases can be divided into the following three cases:\\
\begin{enumerate}
\item[$(i)$] $\vert{\Gamma}:{Q}\vert = 1$; or
\item[$(ii)$] $2 \leq \vert{\Gamma}:{Q}\vert \leq n$; or
\item[$(iii)$] $n+1\leq\vert{\Gamma}:{Q}\vert$.\\
\end{enumerate}

\noindent In case $(i)$, we have ${\rm reg}({\Gamma}:{Q})=1$ and ${\rm reg}({\Gamma}) \geq{4}$. In case $(ii)$, we have ${\rm reg}({\Gamma}:{Q})=2$ and ${\rm reg}({\Gamma}) \geq 5$. In case $(iii)$, it holds that
$${\rm reg}(\Gamma : Q ) \leq  \left\lceil\frac{d-|\Gamma \cap Q|-1}{n}\right\rceil+1  \leq  \left\lceil\frac{d-(m+3)n-1}{n}\right\rceil+1 = {\rm reg}(\Gamma)-3. $$
Thus, by Proposition \ref{Basic facts}.$(5)$ and Remark \ref{remark:Horace}, it must be true that
$${\rm reg}({\Gamma}\cap{Q})={\rm reg}({\Gamma}).$$
To prove the remaining part, let us suppose on the contrary that
$$\vert{\Gamma}\cap{Q}\vert\leq d-({m}+1)n.$$
Then, we have
\begin{equation*}
{\rm reg}({\Gamma})= {\rm reg}({\Gamma}\cap{Q}) \leq \left\lceil \frac{d-({m}+1)n-1}{n}\right\rceil+1 = {\rm reg}({\Gamma})-1.
\end{equation*}
This is a contradiction, and so the desired inequality $\vert{\Gamma}\cap{Q}\vert>d-({m}+1)n$ holds.
\end{proof}

\begin{lemma}\label{lem:4.2}
Let ${\Gamma} \subset \P^n$ be a nondegenerate finite subscheme of degree $d$ in linearly general position such that
$$\left\lceil\frac{d-1}{n+\frac{n}{2n+2}}\right\rceil+3\leq {\rm reg}({\Gamma}).$$
Let $Q \subset \P^n$ be a quadric such that $\vert Q\cap{\Gamma}\vert\geq 2n+1$ and ${\rm reg}({\Gamma}\cap{Q})<{\rm reg}({\Gamma})$. Then
$$\left\lceil\frac{\vert{\Gamma}:Q\vert-1}{n+\frac{n}{2n+2}}\right\rceil+3\leq {\rm reg}({\Gamma}:Q).$$
Also, let $m$ and $m'$ be nonnegative integers such that
$${\rm reg}({\Gamma})=\left\lceil\frac{d-1}{n}\right\rceil-{m}+1 \quad \text{and} \quad {\rm reg}({\Gamma}:Q)=\left\lceil\frac{\vert{\Gamma}:{Q}\vert-1}{n}\right\rceil-{m'}+1.$$
Then $m \geq m'$.
\end{lemma}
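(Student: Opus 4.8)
The plan is to derive both assertions from a single d'Horace estimate combined with two elementary ceiling inequalities, crucially exploiting that $Q$ is a quadric, so that the relevant degree in the Horace method is $k=2$.

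First I would record the starting point and set notation. Since ${\rm reg}(\Gamma\cap Q) < {\rm reg}(\Gamma)$ and $Q$ is a hypersurface of degree $2$, Remark \ref{remark:Horace} gives immediately
\[
{\rm reg}(\Gamma:Q) \geq {\rm reg}(\Gamma) - 2 .
\]
Write $r = {\rm reg}(\Gamma)$, $r' = {\rm reg}(\Gamma:Q)$, and $d' = \vert\Gamma:Q\vert$. Note that $\Gamma \nsubseteq Q$ (otherwise $\Gamma\cap Q = \Gamma$ and ${\rm reg}(\Gamma\cap Q)={\rm reg}(\Gamma)$, contradicting the hypothesis), so $d'\geq 1$ and every ceiling below is well defined. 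By Proposition \ref{Basic facts}.$(4)$ we have the degree drop $d-d' = \vert\Gamma\cap Q\vert \geq 2n+1$, which is the quantitative engine of the argument.

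Next I would prove the two ceiling gaps that I need. Put $\alpha = n + \frac{n}{2n+2}$. The arithmetic check $\frac{2n+1}{\alpha} > 2$ reduces to $2n+1 > 2\alpha = 2n + \frac{n}{n+1}$, i.e.\ to $1 > \frac{n}{n+1}$, which is clear; and $\frac{2n+1}{n} > 2$ is trivial. Hence for each $\beta \in \{\alpha, n\}$ we get $\frac{d-d'}{\beta} > 2$. Combined with the elementary fact that $x-y>2$ forces $\lceil x\rceil - \lceil y\rceil \geq 2$ — because $\lceil x\rceil \geq x = (x-y)+y > 2 + (\lceil y\rceil - 1) = \lceil y\rceil + 1$, and the outer terms are integers — this yields simultaneously
\[
\left\lceil \tfrac{d-1}{\alpha}\right\rceil - \left\lceil \tfrac{d'-1}{\alpha}\right\rceil \geq 2
\qquad\text{and}\qquad
\left\lceil \tfrac{d-1}{n}\right\rceil - \left\lceil \tfrac{d'-1}{n}\right\rceil \geq 2 .
\]

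Finally I would chain these estimates. For the first assertion: using the hypothesis on ${\rm reg}(\Gamma)$, the Horace bound, and the first ceiling gap,
\[
r' \geq r - 2 \geq \left(\left\lceil \tfrac{d-1}{\alpha}\right\rceil + 3\right) - 2 = \left\lceil \tfrac{d-1}{\alpha}\right\rceil + 1 \geq \left\lceil \tfrac{d'-1}{\alpha}\right\rceil + 3 ,
\]
which is exactly the claimed inequality. For the second assertion, substituting the defining equations for $m$ and $m'$ gives
\[
m - m' = \left(\left\lceil \tfrac{d-1}{n}\right\rceil - \left\lceil \tfrac{d'-1}{n}\right\rceil\right) - (r - r');
\]
since $r - r' \leq 2$ by the Horace bound and the second ceiling gap is at least $2$, we conclude $m - m' \geq 0$. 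The only genuinely delicate point is keeping the ceiling manipulation rigorous (tracking the strict-versus-nonstrict inequalities) and observing that the denominator weight $\frac{n}{2n+2}$ is calibrated precisely so that $2n+1 > 2\alpha$; everything else is routine bookkeeping.
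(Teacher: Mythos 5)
Your proposal is correct and follows essentially the same route as the paper: both arguments rest on the d'Horace bound ${\rm reg}(\Gamma:Q)\geq {\rm reg}(\Gamma)-2$ together with the degree drop $\vert\Gamma:Q\vert\leq d-2n-1$, and then conclude by ceiling arithmetic (the paper absorbs the shift $2\bigl(n+\tfrac{n}{2n+2}\bigr)$ into a single numerator, while you phrase the same estimate as a ``gap of $2$'' between the two ceilings, which also cleanly handles the $m\geq m'$ claim). No gaps.
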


\begin{proof}
Since ${\rm reg}({\Gamma}\cap{Q})<{\rm reg}({\Gamma})$, Proposition \ref{Basic facts}.$(5)$ implies that ${\rm reg}({\Gamma}:Q)\geq{\rm reg}({\Gamma})-2$.
 Also, it follows from the assumption $\vert{\Gamma}\cap{Q}\vert\geq 2n+1$ that $\vert{\Gamma}:Q\vert\leq d-2n-1$.
Hence we have the following :
\begin{align*}
{\rm reg}({\Gamma}:Q) & \geq {\rm reg}({\Gamma})-2 \\
& \geq \left\lceil\frac{d-1}{n+\frac{n}{2n+2}}\right\rceil+1 = \left\lceil\frac{d-2(n+\frac{n}{2n+2})-1}{n+\frac{n}{2n+2}}\right\rceil+3 \\
& \geq  \left\lceil\frac{d-2n-1-1}{n+\frac{n}{2n+2}}\right\rceil+3 \\
& \geq \left\lceil\frac{\vert{\Gamma}:Q\vert-1}{n+\frac{n}{2n+2}}\right\rceil+3.
\end{align*}
Next, we will verify the second statement.
We first note that
$$\left\lceil\frac{\vert{\Gamma}:Q\vert}{n}\right\rceil-{m}+1\leq\left\lceil\frac{d-1}{n}\right\rceil-{m}-1 ={\rm reg}({\Gamma})-2 $$
since $\vert{\Gamma}:Q\vert\leq d-2n-1$.
Now, it follows from the inequality ${\rm reg}({\Gamma})-2\leq {\rm reg}({\Gamma}:Q)$ that
$$\left\lceil\frac{\vert{\Gamma}:Q\vert}{n}\right\rceil-{m}+1 \leq {\rm reg}({\Gamma}:Q)=\left\lceil\frac{\vert{\Gamma}:{Q}\vert-1}{n}\right\rceil-{m'}+1.$$
Obviously, this implies the desired inequality ${m} \geq {m}'$.
\end{proof}

Now, we are ready to give a proof of Theorem \ref{thm:main2}.\\

\noindent {\bf Proof of Theorem \ref{thm:main2}.}
First, note that we can write
$${\rm reg}({\Gamma})=\left\lceil\frac{d-1}{n}\right\rceil-{m}+1 \mbox{ for a nonnegative integer} \ m$$
since ${\rm reg}({\Gamma})\leq\left\lceil\frac{d-1}{n}\right\rceil+1$.
Then, it follows from
$$\left\lceil\frac{d-1}{n+\frac{n}{2n+2}}\right\rceil+3\leq {\rm reg}({\Gamma})=\left\lceil\frac{d-1}{n}\right\rceil-{m}+1$$
 that
\begin{equation*}
\frac{d-1}{n+\frac{n}{2n+2}}+3<\frac{d-1}{n}-{m}+2.
\end{equation*}
Thus, we can deduce that
\begin{equation}\label{ineq:3.1}
(1+{m})(2n^2+3n)+1<d.
\end{equation}

Next, we will show that \\
\begin{enumerate}
\item[$(**)$] there is a unique rational normal curve $C$ of degree $n$ such that ${\rm reg}(\Gamma) = {\rm reg}(\Gamma \cap C)$.\\
\end{enumerate}

\noindent Before proving the above statement $(**)$, note that $(**)$ implies the two desired properties
$$\rho(\Gamma)> d-(m+1)n \quad \mbox{and} \quad  \rho(\Gamma)=\vert\Gamma\cap{C}\vert.$$
Indeed, we have
\[
\left\lceil\frac{d-1}{n}\right\rceil-m+1={\rm reg}({\Gamma})={\rm reg}(\Gamma\cap{C})\leq\left\lceil\frac{\vert \Gamma\cap{C}\vert-1}{n}\right\rceil+1.
\]
Hence the first statement, $\rho(\Gamma)> d-(m+1)n$, comes immediately from
\[
\frac{d-1}{n}-m+1\leq\frac{\vert \Gamma\cap{C}\vert-1}{n}+2.
\]
For the second statement, $\rho(\Gamma)=\vert\Gamma\cap{C}\vert$, assume that there is a rational normal curve $C'$ of degree $n$ such that $\vert\Gamma\cap{C'}\vert>\vert\Gamma\cap{C}\vert$. Then, we have
\[
{\rm reg}({\Gamma}) = {\rm reg}(\Gamma\cap{C}) = \left\lceil\frac{\vert \Gamma\cap{C}\vert-1}{n}\right\rceil+1 \leq \left\lceil\frac{\vert \Gamma\cap{C'}\vert-1}{n}\right\rceil+1 = {\rm reg}(\Gamma\cap{C'}) \leq {\rm reg}(\Gamma).
\]
Thus, it follows immediately that
$${\rm reg}(\Gamma\cap{C'})={\rm reg}(\Gamma).$$
This contradicts the uniqueness of $C$.
As a result, it follows that $\rho(\Gamma)=\vert\Gamma\cap{C}\vert$.
Therefore, it suffices to prove $(**)$. From now on, we will prove $(**)$ by showing the existence and the uniqueness of the rational normal curve $C$. \\

\noindent \underbar{\textmd{Existence :}} To prove the existence part in $(**)$, we will proceed in three steps.\\

\textmd{Step 1.}\quad We will construct a subscheme ${\Gamma}'$ of ${\Gamma}$ with $\vert{\Gamma}'\vert  \geq 2{m}{n}+4n+3$ such that \\
\begin{enumerate}
\item[]{(\stars{3})}
${\rm reg}({\Gamma}'\cap{Q})={\rm reg}({\Gamma}')$ for any quadric $Q \subset \mathbb{P}^n$ with $\vert Q\cap{\Gamma}'\vert \geq 2n+1$.\\
\end{enumerate}
If ${\Gamma}$ satisfies the property $($\stars{3}$)$, then we are done. Suppose that ${\Gamma}$ fails to satisfy $($\stars{3}$)$. That is, there exists a quadric $Q_0$ such that $$\vert{\Gamma}\cap{Q_0}\vert \geq 2n+1 \quad \mbox{and} \quad {\rm reg}({\Gamma}) > {\rm reg}({\Gamma}\cap{Q_0}).$$
Let ${\Gamma}_1$ denote subscheme ${\Gamma}:{Q_0}$. If $\Gamma_1$ fails to satisfy $($\stars{3}$)$, then there exists a quadric $Q_1$ such that
$$\vert{\Gamma_1}\cap{Q_1} \vert \geq 2n+1 \quad \mbox{and} \quad {\rm reg}({\Gamma_1}) > {\rm reg}({\Gamma_1}\cap{Q_1}).$$
In this way, setting ${\Gamma}={\Gamma}_0$, we can obtain a subscheme ${\Gamma}_i$ of ${\Gamma}$ and a quadric $Q_{i-1}$ inductively.
Indeed,
if ${\Gamma}_{i-1}$ fails to satisfy $($\stars{3}$)$ then there exists a quadric $Q_{i-1}$ such that
$$\vert{\Gamma}_{i-1}\cap{Q_{i-1}}\vert \geq 2n+1 \quad \mbox{and} \quad {\rm reg}({\Gamma}_{i-1}) > {\rm reg}({\Gamma}_{i-1}\cap{Q_{i-1}}).$$
Then we define ${\Gamma}_i$ as the subscheme ${\Gamma}_{i-1}:Q_{i-1}$ of $\Gamma_{i-1}$. From our construction, it holds that
$$\vert{\Gamma}_{i}\vert\leq{d-(2n+1)i}.$$
Also, it holds by Proposition \ref{Basic facts}.$(5)$ that
$${\rm reg}({\Gamma}_i) \geq {\rm reg}({\Gamma}_{i-1})-2.$$
It follows that ${\rm reg}({\Gamma}_i) \geq {\rm reg}({\Gamma})-2i$, and hence we obtain
\begin{align*}
\left\lceil\frac{d-1}{n}\right\rceil-{m}+1-2i & \quad = \quad {\rm reg}({\Gamma})-2{i} \\
                                              & \quad \leq \quad {\rm reg}({\Gamma}_i) \\
                                              & \quad \leq \quad \left\lceil\frac{|\Gamma_i|-1}{n}\right\rceil+1 \\
                                              & \quad \leq \quad \left\lceil\frac{d-(2n+1)i-1}{n}\right\rceil+1 \\
                                              & \quad = \quad \left\lceil\frac{d-i-1}{n}\right\rceil-2i+1.
\end{align*}
The inequality
$$\left\lceil\frac{d-1}{n}\right\rceil-{m}-2i+1\leq\left\lceil\frac{d-i-1}{n}\right\rceil-2i+1$$
implies that $i<({m}+1)n$. That is, there exists an integer $i<({m}+1)n$ such that the subscheme ${\Gamma}_i$ of ${\Gamma}$ satisfies the property $($\stars{3}$)$. With this $i$ fixed, we need to show that
\begin{equation}\label{ineq:Gamma i}
\vert{\Gamma}_i\vert  \geq 2mn+4n+3.
\end{equation}
To show this, we first claim that
\begin{equation}\label{ineq:4.2}
\vert{\Gamma}_i\vert  \geq d-(2n+1)(({m}+1)n-1)
\end{equation}
(It is clear that the number $d-(2n+1)(({m}+1)n-1)>0$ by our assumptions).
For the sake of contradiction, suppose that
\begin{equation*}
\vert{\Gamma}_i\vert \leq d-(2n+1)(({m}+1)n-1)-1=d-(2n+1)({m}+1)n+2n.
\end{equation*}
Then we have
\begin{equation}\label{ineq:3.4}
{\rm reg}({\Gamma}_i)\leq\left\lceil\frac{\vert{\Gamma}_i\vert-1}{n}\right\rceil+1\leq\left\lceil\frac{d-1}{n}\right\rceil-m-2n(m+1)+2.
\end{equation}
On the other hand, the inequalities ${\rm reg}({\Gamma}_i) \geq {\rm reg}({\Gamma})-2{i}$ and $i<(m+1)n$ imply that
\begin{equation}\label{ineq:3.5}
{\rm reg}({\Gamma}_i) \geq {\rm reg}({\Gamma})-2i = \left\lceil\frac{d-1}{n}\right\rceil-{m}-2i+1 \geq \left\lceil\frac{d-1}{n}\right\rceil-{m}-2n(m+1)+3.
\end{equation}
It is obvious that (\ref{ineq:3.4}) and (\ref{ineq:3.5}) contradict each other.
Now, it follows from inequalities in (\ref{ineq:3.1}) and (\ref{ineq:4.2}) that
$$\vert{\Gamma}_i\vert \geq (1+{m})(2n^2+3n)+2-(2n+1)(({m}+1)n-1)=2{m}{n}+4n+3.$$
Therefore, this ${\Gamma}_i$ is the set ${\Gamma}'$ we are looking for.\\

\textmd{Step 2.}\quad  We will show that\\

\begin{enumerate}
\item[]{\bf(\stars{4})} ${\rm reg}({\Gamma}\cap{Q})={\rm reg}({\Gamma})$ for any quadric $Q$ in $\mathbb{P}^n$ such that $\vert Q\cap{\Gamma}'\vert\geq 2n+1$.\\
\end{enumerate}

\noindent To prove $($\stars{4}$)$, we will apply Lemma \ref{lem:lemma1} to ${\Gamma}'$ constructed in Step 1.
Since ${\Gamma}'$ is in linearly general position in $\mathbb{P}^n$, we can write
$${\rm reg}({\Gamma}')=\left\lceil\frac{\vert{\Gamma}'\vert-1}{n}\right\rceil-{m'}+1$$
for some nonnegative integer $m'$.
Let $Q$ be a quadric in $\mathbb{P}^n$ such that $\vert{Q\cap{{\Gamma}'}}\vert\geq 2n+1$. Then we have ${\rm reg}({\Gamma}'\cap{Q})={\rm reg}({\Gamma}')$ by $($\stars{3}$)$.
Also, we have
$$\vert{\Gamma}'\cap{Q}\vert>\vert{\Gamma}'\vert-({m}'+1)n$$
by Lemma \ref{lem:lemma1}. Since $\vert{\Gamma}'\vert\geq2{m}{n}+4n+3$, it follows that
$$\vert{\Gamma}'\cap{Q}\vert>2{m}{n}-{m}'{n}+3n+3.$$
Next, we will show that ${m}\geq{m}'$.
For this purpose, we first recall the construction of ${\Gamma}'$ described in Step 1. If we write
$${\rm reg}({\Gamma}_j)=\left\lceil\frac{\vert{\Gamma}_{j}\vert-1}{n}\right\rceil-m_{j}+1$$
for some nonnegative integer $m_j$, then $m_{j+1}\leq m_{j}\leq m$ by Lemma \ref{lem:4.2}.
This shows that ${m}\geq{m}'$. Thus, we have
$$\vert{\Gamma}\cap{Q}\vert\geq\vert{\Gamma}'\cap{Q}\vert>{m}{n}+3n+3.$$
Now, we apply Lemma \ref{lem:lemma1} once again to deduce that ${\rm reg}({\Gamma})={\rm reg}({\Gamma}\cap{Q})$.\\

\textmd{Step 3.}\quad  Setting $X_0={\Gamma}$, we will choose inductively a subscheme $X_i$ of ${\Gamma}$ such that
\begin{equation}\label{ineq:Xi Construction}
\left\lceil\frac{\vert{X_i}\vert-1}{n+\frac{n}{2n+2}}\right\rceil+3\leq {\rm reg}(X_i)={\rm reg}({\Gamma})
\end{equation}
as follows. Let $X_i$ be a subscheme of ${\Gamma}$ satisfying (\ref{ineq:Xi Construction}). If $X_i$ is contained in a rational normal curve, then the proof is completed. Suppose not. Then, we have
$$h_{X_i}(2)>2n+1$$
by \cite[Theorem 3.2 and Theorem 4]{EH}. Also, if we write
$${\rm reg}(X_i) = \left\lceil\frac{\vert{X_i}\vert-1}{n}\right\rceil-m_i+1,$$
then
$$\vert X_i\vert > (1+{m}_{i})(2n^2+3n)+1$$
by (\ref{ineq:3.1}). By Step 1 and Step 2, there exists a subscheme $X_i'$ of $X_i$ such that
$${\rm reg}(X_i\cap{Q})={\rm reg}(X_i) \quad \mbox{for any quadric} \quad Q \subset \mathbb{P}^n \quad \mbox{with} \quad \vert Q\cap{X_i'}\vert \geq 2n+1.$$
Choose a subscheme $A_{i}$ of $X_i'$ with $\vert A_i\vert=2n+1$. Then $h_{A_i}(2)=2n+1$ since $A_i$ is $3$-regular. Thus, we may deduce that there exists a quadric $Q_i$ such that $Q_i$ contains $A_i$ but not $X_i$. We set
$$X_{i+1}:=X_i\cap{Q_i}.$$
Since $2n+1\leq\vert Q_i\cap{X_i'}\vert$, we have
$$\left\lceil\frac{\vert{X}_{i+1}\vert-1}{n+\frac{n}{2n+2}}\right\rceil+3\leq\left\lceil\frac{\vert{X}_i\vert-1}{n+\frac{n}{2n+2}}\right\rceil+3\leq {\rm reg}({X}_{i})={\rm reg}({X}_{i+1}).$$
Note that the sequence $\{\vert X_i\vert\}$ is strictly decreasing for $i$.
However, $\{\vert X_i\vert\}$ has a lower bound since
\[
\left\lceil\frac{d-1}{n+\frac{n}{2n+2}}\right\rceil+3\leq \left\lceil\frac{d-1}{n}\right\rceil-m+1={\rm reg}({\Gamma})={\rm reg}(X_i)\leq\left\lceil\frac{\vert X_i\vert-1}{n}\right\rceil+1.
\]
After a finite number of steps (this number is less than or equal to $mn+n-1$), $X_i$ should lie on a rational normal curve and ${\rm reg}(X_i)={\rm reg}({\Gamma})$.\\

\noindent \underbar{\textmd{Uniqueness :}} Suppose that there are two different rational normal curves $C_1$ and $C_2$ in $\mathbb{P}^n$ such that
\[
{\rm reg}({\Gamma}\cap{C_1})={\rm reg}({\Gamma}\cap{C_2}) = {\rm reg}({\Gamma}).
\]
Then
\[
\left\lceil\frac{\vert{{\Gamma}\cap{C_i}}\vert-1}{n}\right\rceil+1 ={\rm reg}({\Gamma}\cap{C_i}) = {\rm reg}({\Gamma}) \geq \left\lceil\frac{d-1}{n+\frac{n}{2n+2}}\right\rceil+3
\]
for $i=1,2$. Thus, it holds that
$$\frac{\vert{{\Gamma}\cap{C_i}}\vert-1}{n}+2 > \frac{d-1}{n+\frac{n}{2n+2}}+3$$
and hence
$$\vert{{\Gamma}\cap{C_i}}\vert \geq \frac{(2n+2)(d-1)}{2n+3}+n+2.$$
Since the degree of the scheme-theoretic intersection ${{C_1}\cap{C_2}}$ is at most $n+2$ (cf. \cite[Theorem2.1]{EH1}), we have
$$\vert{{\Gamma}\cap{C_1}\cap{C_2}}\vert\leq n+2.$$
Then, we have
\begin{align*}
\vert{ ( \Gamma \cap C_1 ) \cup ( \Gamma \cap C_2 ) } \vert & \quad = \quad \vert{{\Gamma}\cap{C_1}}\vert+\vert{{\Gamma}\cap{C_2}}\vert-\vert{{\Gamma}\cap{C_1}\cap{C_2}}\vert \\
                                                            & \quad \geq \quad \frac{2(2n+2)(d-1)}{2n+3}+n+2.
\end{align*}
Here the last term on the right is strictly bigger than $d$, which is a contradiction. This completes the proof. \qed\\

\noindent {\bf Proof of Theorem \ref{thm:main3}.} If $t(\Gamma)=n$, then we apply Theorem \ref{thm:main2} to $\Gamma$. 

Now, suppose that $t(\Gamma)<n$. Then we first apply Theorem \ref{thm:main1} to $\Gamma$. Thus there exists a unique subspace $\mathbb{P}^{t(\Gamma)}$ of $\mathbb{P}^n$ such that 
$${\rm{reg}}(\Gamma)={\rm{reg}}(\Gamma\cap\mathbb{P}^{t(\Gamma)}).$$ 
Then the proof is completed by applying Theorem \ref{thm:main2} to $\Gamma\cap{\mathbb{P}}^{t(\Gamma)}$. \qed\\

\section{Finite schemes in linearly general position having maximal regularity}
\noindent This section is devoted to studying further properties of finite schemes in linearly general position whose regularities are maximal. 
We begin with proving Corollary \ref{cor:maximal case}.\\

\noindent {\bf Proof of Corollary \ref{cor:maximal case}.}
Suppose that ${\rm{reg}}({\Gamma})=\left\lceil\frac{d-1}{n}\right\rceil+1$. Since $d \geq 4n^2 +6n+1$, it holds that
$$\left\lceil\frac{d-1}{n+\frac{n}{2n+2}}\right\rceil+3 \leq{\rm reg}({\Gamma}) \leq \left\lceil\frac{d-1}{n}\right\rceil+1 $$
and hence Theorem \ref{thm:main2} says that there is a unique rational normal curve $C$ such that $\rho (\Gamma) = \vert\Gamma\cap{C}\vert$ and  ${\rm{reg}}({\Gamma})={\rm{reg}}({\Gamma\cap{C}})$.
Then we have
\[
\left\lceil\frac{d-1}{n}\right\rceil+1={\rm{reg}}({\Gamma})={\rm{reg}}({\Gamma\cap{C}}) = \left\lceil\frac{\vert\Gamma\cap{C}\vert-1}{n}\right\rceil+1.
\]
Since we write $d=nq+r+2$ for $0\leq{r}\leq{n-1}$, it follows that $\rho{(\Gamma)} = \vert\Gamma\cap{C}\vert \geq d-r$.

Conversely, suppose that $\rho(\Gamma)\geq d-r$ and so there exists a rational normal curve $C$ such that $\vert\Gamma\cap{C}\vert \geq d-r$. Since $d=nq+r+2$ for $0\leq{r}\leq{n-1}$, it follows that
$${\rm{reg}}({\Gamma\cap{C}}) = \left\lceil\frac{\vert\Gamma\cap{C}\vert-1}{n}\right\rceil+1 \geq \left\lceil\frac{d-r-1}{n}\right\rceil+1 = \left\lceil\frac{d-1}{n}\right\rceil+1$$
But it also holds that
$${\rm{reg}}({\Gamma\cap{C}})\leq{\rm{reg}}({\Gamma})\leq\left\lceil\frac{d-1}{n}\right\rceil+1.$$
Therefore, it is shown that
$$ {\rm{reg}}({\Gamma}) = {\rm{reg}}({\Gamma\cap{C}}) = \left\lceil\frac{d-1}{n}\right\rceil+1.$$
This completes the proof.     \qed\\
 
Let $\Gamma \subset \P^n$ be a nondegenerate finite subscheme. When we add some points to $\Gamma$, its Castelnuovo-Mumford regularity may either increase or remain the same. In the following theorem, we find a condition such that the latter case occurs. 

\begin{theorem}\label{thm:add several points}
Let $m \geq 1$ be an integer and let $\Gamma \subset \P^n$ be a finite scheme of degree
$$d \geq 4n^2 + 6n+1+ 2(n+1)m$$
which is contained in a rational normal curve $C$ of degree $n$. If $A \subset \P^n \setminus C$ is a finite set such that $|A| \leq m$ and $\Gamma \cup A$ is in linearly general position, then
$$\reg (\Gamma \cup A ) = \reg (\Gamma) = \left\lceil\frac{d-1}{n}\right\rceil+1.$$
\end{theorem}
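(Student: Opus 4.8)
The plan is to show the two equalities separately. The second equality $\reg(\Gamma) = \left\lceil\frac{d-1}{n}\right\rceil+1$ is immediate from Proposition \ref{Basic facts}.$(6)$, since $\Gamma$ lies on the rational normal curve $C$ of degree $n$ and is in linearly general position (being a subscheme of $\Gamma \cup A$, which is). So the real content is the first equality $\reg(\Gamma \cup A) = \reg(\Gamma)$. Since $\Gamma \subset \Gamma \cup A$, Proposition \ref{Basic facts}.$(1)$ gives $\reg(\Gamma) \leq \reg(\Gamma \cup A)$ automatically; the task is the reverse inequality $\reg(\Gamma \cup A) \leq \reg(\Gamma)$. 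My strategy is to assume for contradiction that $\reg(\Gamma \cup A) > \reg(\Gamma)$ and apply Theorem \ref{thm:main2} to the scheme $\Gamma \cup A$.

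First I would verify that $\Gamma \cup A$ satisfies the hypotheses of Theorem \ref{thm:main2}. It has degree $d + |A| \geq d \geq 4n^2+6n+1$ and is in linearly general position by assumption. For the regularity hypothesis, I would use that $\reg(\Gamma \cup A) \geq \reg(\Gamma)+1 = \left\lceil\frac{d-1}{n}\right\rceil+2$; combined with the degree bound $d \geq 4n^2+6n+1+2(n+1)m$ (which controls $|A| \leq m$), a routine estimate should confirm that $\reg(\Gamma \cup A) \geq \left\lceil\frac{(d+|A|)-1}{n+\frac{n}{2n+2}}\right\rceil+3$. This is the place where the extra slack $2(n+1)m$ in the degree hypothesis is consumed: adding up to $m$ points increases the numerator $d-1$ by at most $m$, and I must ensure the assumed lower bound on $\reg(\Gamma \cup A)$ still dominates the threshold after this shift.

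Once Theorem \ref{thm:main2} applies, it produces a \emph{unique} rational normal curve $C'$ of degree $n$ with $\reg((\Gamma \cup A) \cap C') = \reg(\Gamma \cup A)$ and $\rho(\Gamma \cup A) = |(\Gamma \cup A) \cap C'|$. I would then compare $C'$ with the given curve $C$. The curve $C$ satisfies $|(\Gamma \cup A) \cap C| \geq |\Gamma| = d$, which I would show forces $\reg((\Gamma \cup A) \cap C)$ to equal $\reg(\Gamma \cup A)$ as well, via the Hilbert-function estimate $\reg((\Gamma \cup A)\cap C) \geq \left\lceil\frac{d-1}{n}\right\rceil+1 = \reg(\Gamma)+$ (potential gain), sandwiched against $\reg(\Gamma \cup A)$. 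By uniqueness of $C'$, this would give $C = C'$, hence $(\Gamma \cup A) \cap C' = (\Gamma \cup A) \cap C \supseteq A$. But $A \subset \P^n \setminus C$, so no point of $A$ lies on $C = C'$, contradicting $\rho(\Gamma \cup A) = |(\Gamma \cup A) \cap C'|$ capturing the maximal intersection while $A$ contributes points off the curve that raise the regularity.

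\textbf{The main obstacle} I anticipate lies in the regularity-comparison step: showing that adding the off-curve points $A$ cannot raise the regularity requires carefully tracking how $\reg((\Gamma \cup A)\cap C)$ relates to $\reg(\Gamma \cup A)$ and extracting the contradiction cleanly. The cleanest route is probably to argue that if $\reg(\Gamma \cup A) > \reg(\Gamma)$, then the unique curve $C'$ from Theorem \ref{thm:main2} must coincide with $C$ (since $C$ already accounts for essentially all of $\Gamma \cup A$ that lies on any rational normal curve of degree $n$), whereupon the defining property $\reg(\Gamma) = \reg((\Gamma \cup A)\cap C) = \reg(\Gamma \cup A)$ directly contradicts the strict inequality $\reg(\Gamma \cup A) > \reg(\Gamma)$. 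Verifying that $C'$ is forced to equal $C$—rather than being some new curve threading through $A$ and part of $\Gamma$—is where the degree hypothesis $d \geq 4n^2+6n+1+2(n+1)m$ does its real work, by guaranteeing $|(\Gamma\cup A)\cap C|$ is so large that no competing degree-$n$ rational normal curve can match it.
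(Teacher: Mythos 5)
Your proposal is correct and follows essentially the same route as the paper: the paper packages your argument into Lemma \ref{lem:adding points}, which applies Theorem \ref{thm:main2} to $\Gamma \cup A$ (the hypothesis $d \geq 4n^2+6n+1+2(n+1)m$ being exactly what makes the threshold $\left\lceil\frac{d+|A|-1}{n+\frac{n}{2n+2}}\right\rceil+3 \leq \reg(\Gamma\cup A)$ hold) and then identifies the resulting unique rational normal curve with $C$ via the intersection-cardinality argument from the uniqueness part of Theorem \ref{thm:main2}, so that $(\Gamma\cup A)\cap C = \Gamma$ forces $\reg(\Gamma\cup A)=\reg(\Gamma)$. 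The only soft spot is your suggested ``sandwich'' $\reg((\Gamma\cup A)\cap C) = \reg(\Gamma\cup A)$ via Hilbert functions alone, which does not quite close on its own; but the cardinality argument you give in your final paragraph (two distinct degree-$n$ rational normal curves meet in a scheme of length at most $n+2$, so they cannot both capture that much of $\Gamma\cup A$) is the correct and sufficient patch, and is what the paper uses.
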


To give a proof of Theorem \ref{thm:add several points}, we begin with the following lemma. 

\begin{lemma}\label{lem:adding points}
Let ${\Gamma}$ be a nondegenerate finite subscheme of degree $d\geq{4n^2+6n+1}$ in linearly general position in $\mathbb{P}^n$. If there is a rational normal curve $C$ in $\mathbb{P}^n$ such that
$$\frac{2n+2}{2n+3}(d-1)+2n+1\leq\vert{\Gamma\cap{C}}\vert,$$
then ${\rm reg}({\Gamma}\cap{C})={\rm reg}({\Gamma})$.
\end{lemma}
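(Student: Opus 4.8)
The plan is to derive the lemma from Theorem~\ref{thm:main2}: the hypothesis on $\vert\Gamma\cap C\vert$ will be used to force $\reg(\Gamma)$ to be large enough to trigger that theorem, and then the rational normal curve $C$ will be identified with the unique curve the theorem produces, whence $\reg(\Gamma\cap C)=\reg(\Gamma)$.

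First I would record the exact value of $\reg(\Gamma\cap C)$. As a subscheme of $\Gamma$, the scheme $\Gamma\cap C$ is in linearly general position, and since $d\geq 4n^2+6n+1$ its degree far exceeds $n+1$, so it is nondegenerate; lying on the degree-$n$ rational normal curve $C$, Proposition~\ref{Basic facts}.$(6)$ gives $\reg(\Gamma\cap C)=\lceil(\vert\Gamma\cap C\vert-1)/n\rceil+1$. Next I would convert the numerical hypothesis into a regularity bound: using $\lceil x\rceil\geq x$ and the identity $n+\tfrac{n}{2n+2}=\tfrac{n(2n+3)}{2n+2}$, the assumption $\vert\Gamma\cap C\vert\geq\tfrac{2n+2}{2n+3}(d-1)+2n+1$ yields $\reg(\Gamma\cap C)\geq \tfrac{d-1}{\,n+\frac{n}{2n+2}\,}+3$, and since $\reg(\Gamma\cap C)$ is an integer this improves to $\reg(\Gamma\cap C)\geq\lceil(d-1)/(n+\tfrac{n}{2n+2})\rceil+3$. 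Finally, monotonicity of regularity under passage to subschemes (Proposition~\ref{Basic facts}.$(1)$) gives $\reg(\Gamma)\geq\reg(\Gamma\cap C)$, so both hypotheses of Theorem~\ref{thm:main2} now hold.

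Applying Theorem~\ref{thm:main2} produces a unique rational normal curve $C'$ of degree $n$ with $\reg(\Gamma\cap C')=\reg(\Gamma)$ and $\vert\Gamma\cap C'\vert=\rho(\Gamma)$. It then remains only to show $C=C'$, for then $\reg(\Gamma\cap C)=\reg(\Gamma\cap C')=\reg(\Gamma)$ as claimed. I would establish this by repeating the counting argument from the uniqueness part of the proof of Theorem~\ref{thm:main2}: both $\vert\Gamma\cap C\vert$ and $\vert\Gamma\cap C'\vert$ are at least $\tfrac{(2n+2)(d-1)}{2n+3}+n+2$ (for $C$ directly from the hypothesis, noting $2n+1\geq n+2$ for $n\geq 1$; for $C'$ from $\reg(\Gamma\cap C')=\reg(\Gamma)$ being large), while the scheme-theoretic intersection $C\cap C'$ has degree at most $n+2$ by \cite[Theorem 2.1]{EH1}. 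Hence, if $C\neq C'$,
$$\vert(\Gamma\cap C)\cup(\Gamma\cap C')\vert=\vert\Gamma\cap C\vert+\vert\Gamma\cap C'\vert-\vert\Gamma\cap C\cap C'\vert\geq\frac{2(2n+2)(d-1)}{2n+3}+n+2>d,$$
contradicting $\vert(\Gamma\cap C)\cup(\Gamma\cap C')\vert\leq\vert\Gamma\vert=d$. Therefore $C=C'$.

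The only genuinely delicate point is the calibration of constants in the conversion step: the threshold $\tfrac{2n+2}{2n+3}(d-1)+2n+1$ is tuned precisely so that the resulting regularity bound lands exactly on $\lceil(d-1)/(n+\tfrac{n}{2n+2})\rceil+3$, matching the hypothesis of Theorem~\ref{thm:main2}. I would carefully verify the ceiling bookkeeping there (that an integer bounded below by a real number is bounded below by its ceiling) and confirm that the additive term $2n+1$ comfortably exceeds the $n+2$ required to feed the union-counting argument. Everything after the invocation of Theorem~\ref{thm:main2} is a routine repetition of the uniqueness computation already carried out in Section~4, so the substance of the proof lies entirely in the reduction that feeds that theorem.
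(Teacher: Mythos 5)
Your proposal is correct and follows essentially the same route as the paper: convert the hypothesis on $\vert\Gamma\cap C\vert$ into the bound $\reg(\Gamma)\geq\reg(\Gamma\cap C)\geq\left\lceil\frac{d-1}{n+\frac{n}{2n+2}}\right\rceil+3$ via Proposition~\ref{Basic facts}, invoke Theorem~\ref{thm:main2} to obtain a curve $C'$ with $\reg(\Gamma\cap C')=\reg(\Gamma)$, and identify $C=C'$ by the same intersection-counting argument used in the uniqueness part of that theorem's proof. The ceiling bookkeeping and the verification that $2n+1\geq n+2$ are exactly the details the paper leaves implicit, so nothing further is needed.
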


\begin{proof}
Our assumptions imply that
$$\left\lceil\frac{2n+2}{2n^2+3n}(d-1)\right\rceil+3 \leq \left\lceil\frac{\vert{\Gamma\cap{C}}\vert-1}{n}\right\rceil+1 = {\rm reg}({\Gamma}\cap{C})\leq{\rm reg}({\Gamma}).$$
Then we can apply Theorem \ref{thm:main2}, and get a rational normal curve $C'$ such that
$${\rm reg}({\Gamma}\cap{C'})={\rm reg}({\Gamma}).$$
Now, one can show that the two rational normal curves $C$ and $C'$ are equal by using the same idea used in the proof of the uniqueness part of Theorem \ref{thm:main2}.
Thus, we get the desired equality ${\rm reg}({\Gamma}\cap{C})={\rm reg}({\Gamma})$.
\end{proof}

\noindent {\bf Proof of Theorem \ref{thm:add several points}.} By Lemma \ref{lem:adding points}, it suffices to show that
$$\frac{2n+2}{2n+3}(d+|A|-1)+2n+1 \leq d.$$
This inequality is equivalent to
$$4n^2 + 6n+1 +2(n+1)|A|  \leq d,$$
which holds by our assume on $d$ since $|A| \leq m$. This completes the proof.\qed \\

\begin{example}
Let $C \subset \P^5$ be a rational normal curve of degree $5$ and let $\Gamma$ be a subscheme of $C$ of length $10000$. Theorem \ref{thm:add several points} says that if $A \subset \P^5 \setminus C$ is a finite set such that $|A| \leq 822$ and $\Gamma \cup A \subset \P^5$ is in linearly general position, then $\reg (\Gamma \cup A ) = \reg (\Gamma) =2001$.
\end{example}
 
Finally, we provide a cohomological characterization of finite schemes in uniform position of maximal regularity. More precisely, let $\Gamma \subset \P^n$ be a finite scheme in linearly general position of degree $d \geq 4n^2 +6n+1$ such that $\reg (\Gamma) =\left\lceil\frac{d-1}{n}\right\rceil+1$. If we write 
\begin{equation*}
d = nq+r+2 \quad \mbox{for some} \quad 0 \leq r \leq n-1,  
\end{equation*} 
then $\reg (\Gamma) =q+2$ and hence $h^1 (\P^n , \mathcal{I}_{\Gamma} (q)) > 0$. Furthermore, if $\Gamma$ lies on a rational normal curve, then one can easily show that
$$h^1 (\P^n , \mathcal{I}_{\Gamma} (q)) = r+1.$$
The following theorem shows that the converse is also true.

\begin{theorem}\label{thm:coh char max reg}
Let ${\Gamma} \subset \P^n$, $n \geq 2$, be a finite subscheme of degree $d \geq 4n^2+6n+1$ in linearly general position.
Write $d = nq+r+2$ for some $0 \leq r \leq n-1$.
Then
$$h^1 (\P^n , \mathcal{I}_{\Gamma} (q)) \leq r+1.$$
Moreover, the following two statements are equivalent:
\begin{enumerate}
\item[$(i)$] $\Gamma$ lies on a rational normal curve.
\item[$(ii)$] $h^1 (\P^n , \mathcal{I}_{\Gamma} (q)) = r+1$.
\end{enumerate}
In particular, if $\Gamma$ is a finite set such that $h^1 (\P^n , \mathcal{I}_{\Gamma} (q))>0$, then the following two statements are equivalent:
\begin{enumerate}
\item[$(i)$] $\Gamma$ is in uniform position.
\item[$(ii)$] $h^1 (\P^n , \mathcal{I}_{\Gamma} (q)) = r+1$.
\end{enumerate}
\end{theorem}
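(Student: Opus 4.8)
The plan is to translate the entire statement into Hilbert functions and then feed the maximal--regularity hypothesis into the machinery already built in Sections 3--4. First I would record the cohomological dictionary for a finite scheme: from $0 \to \mathcal{I}_\Gamma(\ell) \to \mathcal{O}_{\P^n}(\ell) \to \mathcal{O}_\Gamma(\ell)\to 0$ and $n\ge 2$ one gets $h^1(\P^n,\mathcal{I}_\Gamma(\ell)) = d - h_\Gamma(\ell)$ for all $\ell$, while $H^i(\mathcal{I}_\Gamma(\ell))=0$ for $2\le i\le n-1$. Hence $h^1(\P^n,\mathcal{I}_\Gamma(q))>0$ is equivalent to $\Gamma$ failing to be $q$-normal, i.e. to $\reg(\Gamma)\ge q+2$; since \eqref{ineq:kwak} with $t(\Gamma)=n$ forces $\reg(\Gamma)\le q+2$, the condition $h^1(\cdot(q))>0$ is exactly that $\reg(\Gamma)=q+2=\lceil\frac{d-1}{n}\rceil+1$ is maximal. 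This identification is what allows Corollary~\ref{cor:maximal case} and Theorem~\ref{thm:Nagel} to enter.

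For the inequality $h^1(\P^n,\mathcal{I}_\Gamma(q))\le r+1$ I would invoke the lower bound $h_\Gamma(\ell)\ge\min\{d,\,n\ell+1\}$ valid for any finite scheme in linearly general position; applied at $\ell=q$ and using $d=nq+r+2>nq+1$ it gives $h_\Gamma(q)\ge nq+1$, whence $h^1=d-h_\Gamma(q)\le r+1$. The clean way to see the bound is to choose $\Gamma'\subseteq\Gamma$ of degree $nq+1$ and cover a chosen subscheme of it by $q$ hyperplanes (at most $n$ points per hyperplane, which a linearly-general-position configuration always permits), producing degree-$q$ forms that separate, and then to use monotonicity $h_\Gamma\ge h_{\Gamma'}$. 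The implication $(i)\Rightarrow(ii)$ is then a direct computation: if $\Gamma$ lies on a rational normal curve $C$ of degree $n$, projective normality of $C$ identifies $h_\Gamma(q)$ with the value $\min\{d,nq+1\}=nq+1$ of a degree-$d$ divisor on $C\cong\P^1$ embedded by $|\mathcal{O}(nq)|$, so $h^1=d-(nq+1)=r+1$.

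The heart is $(ii)\Rightarrow(i)$, where I must upgrade ``a rational normal curve meets $\Gamma$ in almost all of it'' to ``$\Gamma$ lies on it''. Assume $h^1(\P^n,\mathcal{I}_\Gamma(q))=r+1$, equivalently $h_\Gamma(q)=nq+1$ with $\reg(\Gamma)$ maximal. By Corollary~\ref{cor:maximal case} there is a rational normal curve $C$ of degree $n$ with $d':=|\Gamma\cap C|=\rho(\Gamma)\ge d-r=nq+2$. Since $d'>nq$, every degree-$q$ form vanishing on $\Gamma\cap C$ restricts to a binary form of degree $nq$ on $C\cong\P^1$ that vanishes at more than $nq$ points, hence vanishes on all of $C$; thus $I(\Gamma\cap C)_q=I(C)_q$. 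On the other hand $h_{\Gamma\cap C}(q)=\min\{d',nq+1\}=nq+1=h_\Gamma(q)$, and as $I(\Gamma)_q\subseteq I(\Gamma\cap C)_q$ have equal dimension they coincide, giving $I(\Gamma)_q=I(C)_q$. Finally, because $C$ is projectively normal with ideal generated in degree $2$, for $q\ge 2$ the degree-$q$ forms in $I(C)_q$ cut out $C$ scheme-theoretically, i.e. $V(I(C)_q)=C$; since $\Gamma\subseteq V(I(\Gamma)_q)=V(I(C)_q)=C$, I conclude $\Gamma\subseteq C$. The bound $q\ge 2$ is guaranteed by $d\ge 4n^2+6n+1$.

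For the final ``in particular'' I would take $\Gamma$ reduced with $h^1(\P^n,\mathcal{I}_\Gamma(q))>0$, so $\reg(\Gamma)$ is again maximal, and chain two equivalences: points on a rational normal curve are automatically in uniform position (the Hilbert function of any subset depends only on its cardinality and on $n$), while conversely, since $d\ge 4n^2+6n+1>(n+1)^2$, Theorem~\ref{thm:Nagel}.$(1)$ shows that a uniform-position set of maximal regularity lies on a rational normal curve. Thus ``in uniform position'' $\iff$ ``lies on a rational normal curve'', which by the equivalence $(i)\iff(ii)$ already established is the same as $h^1=r+1$. The steps I expect to require the most care are the linearly-general-position lower bound $h_\Gamma(\ell)\ge\min\{d,n\ell+1\}$ in the non-reduced case (where the hyperplane-covering must be made scheme-theoretic) and the passage $I(\Gamma)_q=I(C)_q\Rightarrow\Gamma\subseteq C$, which is precisely where the \emph{exact} value $r+1$, rather than merely $h^1>0$, is used to turn $\rho(\Gamma)\ge d-r$ into genuine containment.
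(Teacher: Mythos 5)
Your proof is correct, and while it leans on the same heavy input as the paper (Theorem \ref{thm:main2} and Corollary \ref{cor:maximal case} to produce the rational normal curve $C$ with $|\Gamma\cap C|\geq d-r$ once $h^1>0$ forces maximal regularity), the two core computations are done by a genuinely different route. For the bound $h^1(\P^n,\mathcal I_\Gamma(q))\leq r+1$ you use the Eisenbud--Harris inequality $h_\Gamma(\ell)\geq\min\{d,n\ell+1\}$ for schemes in linearly general position, which is more elementary and does not need the curve $C$ at all; the paper instead derives it from the two exact sequences $0\to\mathcal I_C\to\mathcal I_{\Gamma_0}\to\mathcal O_C(-\Gamma_0)\to 0$ and $0\to\mathcal I_\Gamma\to\mathcal I_{\Gamma_0}\to\mathcal O_\Gamma(-\Gamma_0)\to 0$ (with $\Gamma_0=\Gamma\cap C$), getting $h^1(\mathcal I_\Gamma(q))\leq(r-s)+(s+1)$. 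For the equality case the paper characterizes $h^1=r+1$ as the vanishing of the map $H^0(\mathcal I_{\Gamma_0}(q))\to H^0(\mathcal O_\Gamma(-\Gamma_0)\otimes\mathcal O_{\P^n}(q))$ and uses $I(C)_q=I(\Gamma_0)_q$ to conclude $\Gamma=\Gamma_0$, whereas you compare dimensions to get $I(\Gamma)_q=I(\Gamma\cap C)_q=I(C)_q$ and then invoke that $I(C)$ is generated by quadrics so that $V(I(C)_q)=C$ for $q\geq 2$, forcing $\Gamma\subseteq C$; both arguments are valid and use the exact value $r+1$ in the same essential way. Your approach buys a cleaner, curve-free proof of the inequality and a more ideal-theoretic equality criterion, at the cost of invoking the scheme-theoretic Hilbert-function bound of [EH] and the generation of $I(C)$ in degree $2$; the paper's cohomological bookkeeping keeps everything inside the two short exact sequences. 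The final ``in particular'' is handled identically in substance in both arguments.
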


\begin{proof} 
Note that $h^1 (\P^n ,  \mathcal{I}_{\Gamma} (q))=0$ if and only if ${\rm reg}({\Gamma})<\left\lceil\frac{d-1}{n}\right\rceil+1$. So if $h^1 (\P^n ,  \mathcal{I}_{\Gamma} (q))=0$, then we are done. Now, we assume that $h^1 (\P^n ,  \mathcal{I}_{\Gamma} (q))>0$, or equivalently, that ${\rm reg}({\Gamma}) = \left\lceil\frac{d-1}{n}\right\rceil+1$. Let $C \subset \P^n$ be the rational normal curve of degree $n$ such that $\rho (\Gamma) = | \Gamma_0 |$ where $\Gamma_0 = \Gamma \cap C$ (cf. Theorem \ref{thm:main2}). Then $|\Gamma_0 | = nq+2+s$ for some $0 \leq s \leq r$ (cf. Corollary \ref{cor:maximal case}), and
we have the exact sequence
$$0 \rightarrow \mathcal{I}_C \rightarrow \mathcal{I}_{\Gamma_0} \rightarrow \mathcal{O}_{C} (-\Gamma_0 ) \rightarrow 0,$$
which enables us to verify that
$$I(C)_q = I(\Gamma_0 )_q \quad \mbox{and} \quad h^1 (\P^n ,  \mathcal{I}_{\Gamma_0} (q))=s+1.$$
Also, we have the exact sequence
$$0 \rightarrow \mathcal{I}_{\Gamma} \rightarrow \mathcal{I}_{\Gamma_0} \rightarrow \mathcal{O}_{\Gamma} (-\Gamma_0 ) \rightarrow 0$$
of coherent sheaves on $\P^n$. This induces the following cohomology long exact sequence:
$$H^0 (\P^n , \mathcal{I}_{\Gamma_0} (q)) \rightarrow H^0 (\Gamma , \mathcal{O}_{\Gamma} (-\Gamma_0 ) \otimes \mathcal{O}_{\P^n} (q)) \rightarrow H^1 (\P^n , \mathcal{I}_{\Gamma} (q)) \rightarrow H^1 (\P^n , \mathcal{I}_{\Gamma_0} (q)) \rightarrow 0$$
Then it follows that
$$h^1 (\P^n , \mathcal{I}_{\Gamma} (q)) \leq h^0 (\Gamma , \mathcal{O}_{\Gamma} (-\Gamma_0 ) \otimes \mathcal{O}_{\P^n} (q)) + h^1 (\P^n , \mathcal{I}_{\Gamma_0} (q)) = (r-s)+(s+1) = r+1.$$

Moreover, $h^1 (\P^n , \mathcal{I}_{\Gamma} (q)) = r+1$ if and only if the homomorphism
$$H^0 (\P^n , \mathcal{I}_{\Gamma_0} (q)) \rightarrow H^0 (\Gamma , \mathcal{O}_{\Gamma} (-\Gamma_0 ) \otimes \mathcal{O}_{\P^n} (q))$$
is the zero map. Since $\Gamma_0 = \Gamma \cap C$ and $I(C)_q = I(\Gamma_0 )_q$, this can happen exactly when $\Gamma = \Gamma_0$ and hence $\Gamma$ is contained in the rational normal curve $C$.

For the last statement, first assume that $\Gamma$ is in uniform position. Then $\Gamma$ lies on a rational normal curve of degree $n$ by Theorem \ref{thm:Nagel}. Thus, $h^1 (\P^n , \mathcal{I}_{\Gamma} (q)) = r+1$. Conversely, assume that $h^1 (\P^n , \mathcal{I}_{\Gamma} (q)) = r+1$. Then ${\rm reg}({\Gamma}) = \left\lceil\frac{d-1}{n}\right\rceil+1$ and hence $\Gamma$ lies on a rational normal curve of degree $n$ by Theorem \ref{thm:Nagel}.$(1)$. Obviously, any finite subset of a rational normal curve is in uniform position, which completes the proof.
\end{proof}

\end{document}